\theoremstyle{plain}
\DeclareMathAlphabet{\pazocal}{OMS}{zplm}{m}{n}
\newtheorem{theorem}{Theorem}[section]
\newtheorem{proposition}[theorem]{Proposition}
\newtheorem{lemma}[theorem]{Lemma}
\newtheorem{corollary}[theorem]{Corollary}
\newtheorem{remark}[theorem]{Remark}
\DeclareFontFamily{U}{wncy}{}
    \DeclareFontShape{U}{wncy}{m}{n}{<->wncyr10}{}
    \DeclareSymbolFont{mcy}{U}{wncy}{m}{n}
    \DeclareMathSymbol{\Sh}{\mathord}{mcy}{"58}
\theoremstyle{definition}
\newcommand{\appsection}[1]{\let\oldthesection\thesection
\renewcommand{\thesection}{Appendix \oldthesection}
\section{#1}\let\thesection\oldthesection}
\newtheorem{definition}[theorem]{Definition}
\theoremstyle{remark}
\DeclareMathOperator{\Div}{Div}
\DeclareMathOperator{\Norm}{Norm}
\def\Z{{\mathbb{Z}}}
\def\Q{{\mathbb{Q}}}
\def\P{{\mathbb{P}}}
\def\O{{\mathcal{O}}}
\def\L{{\mathcal{L}}}
\def\T{{\mathcal{T}}}
\DeclareMathOperator{\rk}{rank}
\DeclareMathOperator{\Pic}{Pic}
\DeclareMathOperator{\NS}{NS}
\newcommand{\jerson}[1]{{\color{blue} ($\spadesuit$ Comment: #1)}}
\title{Picard rank and Ulrich line bundles on bidouble planes}
\author{Jerson Caro}
\address{Jerson Caro \\
Departamento de Matemáticas, Universidad de los Andes, Bogotá, Colombia
}
\email{jl.caro10@uniandes.edu.co}
\author {Juan Cruz-Penagos}
\address{Universidad del Valle Departamento de Matemáticas, Calle 13 100-00, Cali-Colombia}
\email[J. Cruz-Penagos]{juan.sebastian.cruz@correounivalle.edu.co}
\author{Sergio Troncoso}
\address{Politecnico di Torino Corso Duca degli Abruzzi 24
10129, Torino, Italy}
\email[S. Troncoso]{sergio.troncoso@polito.it}
\subjclass[2020]{Primary 14C22; Secondary 14C20, 14E20, 14J60} %
\thanks{J.C. was supported by a Simons Foundation grant (Grant \#550023). J.C-P.  was supported by the Master's Program, Universidad del Valle. S.T. was partially supported by the PRIN project “Multilinear Algebraic Geometry” of MUR (2022ZRRL4C)}
\keywords{Picard number, Ulrich bundles, bidouble covers, singular double covers}%
\begin{document}

\begin{abstract}
We determine the Picard number and the Ulrich complexity of general bidouble covers of the projective plane, providing the first systematic study of Ulrich bundles on non-cyclic abelian covers. For a bidouble plane branched along three smooth curves of degrees $n_1,n_2,n_3$, we show that 
$\rho(S)=1$ unless $(n_1,n_2,n_3)$ belongs to an explicit list, thereby extending Buium’s classical results on double planes to the non-cyclic case. As an application, we determine the range of branch degrees for which Ulrich line bundles could exist. Our method combines the invariant-theoretic decomposition of 
$H^2(S,\Q)$ under the Galois group with cohomological criteria for Ulrich bundles.
\end{abstract}
\maketitle

%%%%%%%%%%%%%%%%%%%%%%%%%%
%%%%%%%%%%%%%%%%%%%%%%%%%%

\section{Introduction}

Ulrich bundles constitute a distinguished class of arithmetically Cohen–Macaulay vector bundles whose
cohomological extremality reflects the embedding of a projective variety into projective space. Since their introduction by Eisenbud and Schreyer \cite{ESW03}, Ulrich bundles have played a central role in algebraic geometry, appearing in the study of syzygies, Chow forms, and the geometry of linear projections (see Section \ref{UBundles} for the definition and basic facts).  

%Let $X \xhookrightarrow{\iota} \mathbb{P}^N$ be a smooth projective variety of dimension $n$, and let $H\coloneqq \iota^*\mathcal{O}_{\mathbb{P}^N}(1)$ denote the hyperplane class. A vector bundle $\mathcal{E}$ on $X$ is called \emph{Ulrich}  with respect to $H$ if its twists satisfy the vanishing conditions
%\[
%H^i(X,\mathcal{E}(-iH))=0 \quad \text{for all } i>0, 
%\qquad 
%H^j(X,\mathcal{E}(-(j+1)H))=0 \quad \text{for all } j<n.
%\]
Despite intensive progress, the existence and structure of Ulrich bundles on surfaces of general type are currently understood only in a limited number of cases. The case of cyclic covers
of $\mathbb P^2$ has been particularly fruitful. Indeed, for double planes, Ulrich line bundles were studied in \cite{PN21}, while rank two Ulrich bundles were constructed in \cite{ST22, MKNP25}. More generally, existence results are now known for double coverings of projective spaces, and on Fano and Calabi--Yau double coverings of $\mathbb{P}^3$ \cite{MKNP25, Vac25, Vac251} and for cyclic coverings of higher rank \cite{PP25}. It is also worth mentioning the work of \cite{BHMP}, where the authors investigate aCM sheaves on double planes, that is, hyperplanes of $\P^3$ with multiplicity two.

This paper provides the first systematic study of the Ulrich complexity of bidouble planes with respect to the pullback of a line. Although this divisor is not very ample, it is ample and globally generated, so the definition of Ulrich bundles extends naturally (see for example \cite{But24}). Bidouble planes are understood here as $(\mathbb{Z}/2\mathbb{Z})^2$-covers of $\mathbb{P}^2$ branched along three transversal curves whose degrees share the same parity. We call the bidouble plane \textit{even} if all of these degrees are even; otherwise, we call it \textit{odd}.

Our first main result determines the Picard number of a general bidouble plane,
extending the classical results of Buium \cite{Bui83}
from double planes to non-cyclic abelian covers.

\begin{theorem}\label{Main Theorem}
Let $S\to \mathbb{P}^2$ be a smooth bidouble plane branched along general curves $D_1,D_2,D_3$ of degrees $n_1,n_2,n_3$, respectively. Then $\rho(S)>1$ if and only if some permutation of $(n_1,n_2,n_3)$ belongs to
\[
\{(0,2,2n):n\geq 1\}\;\cup\;\{(0,4,2n):n\geq 2\}\;\cup\;\{(1,3,2n+1):n\geq 0\}\;\cup\;\{(2,2,2n):n\geq 1\}.
\]
\end{theorem}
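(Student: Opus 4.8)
The plan is to diagonalize the Galois action on second cohomology and reduce the computation of $\rho(S)$ to a case analysis of the three intermediate double planes. Write $G=(\mathbb{Z}/2\mathbb{Z})^2$ for the Galois group, with nontrivial characters $\chi_1,\chi_2,\chi_3\in\widehat{G}$, and let $f\colon S\to\mathbb{P}^2$ be the covering map. Since $G$ acts on $H^2(S,\Q)$ preserving both the Hodge decomposition and the rational subspace $\NS(S)_\Q$, I would first decompose
\[
H^2(S,\Q)=\bigoplus_{\chi\in\widehat{G}}H^2(S,\Q)_\chi,\qquad \NS(S)_\Q=\bigoplus_{\chi\in\widehat{G}}\big(\NS(S)_\Q\cap H^2(S,\Q)_\chi\big).
\]
The invariant part $H^2(S,\Q)_{\chi_0}=f^*H^2(\mathbb{P}^2,\Q)$ is one–dimensional and spanned by the algebraic pullback of a line, so it always contributes exactly $1$. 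Hence $\rho(S)>1$ if and only if $\NS(S)_\Q\cap H^2(S,\Q)_{\chi_i}\neq 0$ for some nontrivial $\chi_i$. Each $\chi_i$ corresponds to an intermediate quotient $Q_i=S/\ker\chi_i$, a double cover of $\mathbb{P}^2$ branched along the reducible, $n_jn_k$–nodal curve $D_j+D_k$ (with $\{i,j,k\}=\{1,2,3\}$), and $H^2(S,\Q)_{\chi_i}$ is identified, as a rational Hodge structure, with the anti-invariant part $H^2(\widetilde{Q}_i,\Q)^-$ of a resolution $\widetilde{Q}_i$.

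Next I would compute the Hodge data of each eigenspace via Pardini's structure theory for abelian covers. Writing $L_i$ for the line bundle attached to $\chi_i$, the defining relations of a bidouble cover give $2L_i\equiv D_j+D_k$, hence $\deg L_i=d_i:=(n_j+n_k)/2$, and the canonical splitting $f_*\omega_S=\bigoplus_\chi \omega_{\mathbb{P}^2}(L_\chi)$ yields
\[
h^{2,0}(S)_{\chi_i}=h^0\big(\mathbb{P}^2,\mathcal{O}(d_i-3)\big)=\binom{d_i-1}{2}.
\]
Thus $h^{2,0}(S)_{\chi_i}=0$ precisely when $n_j+n_k\le 4$, and the proof splits according to whether this vanishing occurs. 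When $h^{2,0}(S)_{\chi_i}=0$ the eigenspace is of pure type $(1,1)$, so by the Lefschetz $(1,1)$–theorem every rational class in it is algebraic and it contributes exactly $\dim H^2(S,\Q)_{\chi_i}=b_2(\widetilde{Q}_i)^-$ to $\rho(S)$.

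I would then run through the finitely many pairs with $n_j+n_k\in\{2,4\}$, computing $b_2^-$ on the resolution $\widetilde{Q}_i$ (blow up the $n_jn_k$ nodes, take the double cover, and apply Noether's formula $K^2+\chi_{\mathrm{top}}=12\chi(\mathcal{O})$ with $\chi(\mathcal{O})=1$). The conic pair $(0,2)$ gives $\widetilde{Q}_i\cong\mathbb{P}^1\times\mathbb{P}^1$ with $b_2^-=1$; the smooth quartic pair $(0,4)$ gives a degree–two del Pezzo surface with $b_2^-=7$; and the reducible quartic pairs $(1,3)$ and $(2,2)$ give rational surfaces with $b_2^-=4$ and $b_2^-=3$, respectively. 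The sole exceptional pair is $(1,1)$, where $\widetilde{Q}_i$ resolves the quadric cone and $K^2=8$ forces $b_2^-=0$, so a pair of lines never raises the Picard number. Collecting those triples $(n_1,n_2,n_3)$ containing at least one pair with positive anti-invariant contribution — that is, a pair in $\{(0,2),(0,4),(1,3),(2,2)\}$ — produces exactly the four families in the statement, the ranges on $n$ serving only to discard the degenerate triples with two vanishing degrees and to remove overlaps between the families.

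The remaining, and I expect hardest, step is the converse: when every pair satisfies $n_j+n_k\ge 6$, so that $h^{2,0}(S)_{\chi_i}>0$ for all $i$, I must show that the \emph{general} bidouble plane has no nonzero algebraic class in any nontrivial eigenspace, i.e. $\rho(S)=1$. This is a Noether–Lefschetz statement for the family obtained by varying $(D_1,D_2,D_3)$ in $\prod_i|\mathcal{O}_{\mathbb{P}^2}(n_i)|$, and the genuine difficulty is that the intermediate branch divisors $D_j+D_k$ are reducible and nodal, so the classical Noether–Lefschetz theorem for smooth plane curves does not apply verbatim. I would establish it by an infinitesimal/monodromy argument in the spirit of Buium: exhibit the transcendental part of each eigenspace as a nontrivial variation of Hodge structure over the parameter space and prove its monodromy representation has no nonzero invariants — either by showing, through surjectivity of the relevant Jacobian-ring (Gaussian) multiplication maps, that the differential of the $\chi_i$–period map is nonzero, so that the Hodge locus carrying a surviving $(1,1)$–class is a proper analytic subset; or by specializing $D_j+D_k$ to a configuration whose double cover is a smooth double plane governed by Buium's original theorem and invoking semicontinuity of the Picard number. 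A general member then avoids every such locus, giving $\rho(S)=1$ outside the listed families and completing the equivalence.
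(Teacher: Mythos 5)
Your overall strategy is the same as the paper's: decompose $H^2(S,\Q)$ into character eigenspaces for $(\Z/2\Z)^2$, identify each nontrivial eigenspace with the anti-invariant cohomology of the corresponding intermediate double plane branched along $D_j+D_k$, handle the pairs with $n_j+n_k\le 4$ by an explicit computation (your values $b_2^-=1,7,4,3,0$ for $(0,2),(0,4),(1,3),(2,2),(1,1)$ agree with the paper's $\rho(Y_i)=2,8,5,4,1$ in Proposition~\ref{Picard intermediate}), and reduce the pairs with $n_j+n_k\ge 6$ to a Noether--Lefschetz statement ultimately resting on Buium. Your bookkeeping of which triples arise is also correct.

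The genuine gap is in the converse, which you rightly flag as the hardest step but then leave as a sketch with two alternatives, one of which fails as stated. The ``semicontinuity of the Picard number'' route has the direction backwards: the reducible nodal curve $D_j+D_k$ is the \emph{special} member of the family of degree-$(n_j+n_k)$ plane curves, and the smooth curve to which Buium applies is the \emph{generic} one. Picard number can only jump \emph{up} at special fibers, so knowing $\rho=1$ for the general smooth double plane gives no upper bound on $\rho$ of the nodal one --- indeed the resolution $\widetilde Y_i$ already has $\rho=1+n_jn_k$ from the exceptional $(-2)$-curves. What must be proved is that these exceptional curves (together with the hyperplane class) account for \emph{all} of $\NS(\widetilde Y_i)_\Q$, and semicontinuity cannot see this. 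The paper's Lemma~\ref{NS n1+n2>6} closes exactly this gap: it builds a semistable model of the one-parameter degeneration from a smooth branch curve to $D_j+D_k$, and uses the Clemens--Schmid sequence together with the Local Invariant Cycle Theorem to show that any hypothetical algebraic class on $\widetilde Y_i$ orthogonal to the known lattice would specialize to a nonzero monodromy-invariant $(1,1)$-class on the nearby smooth double plane, contradicting Buium. Your first alternative (an infinitesimal Noether--Lefschetz argument via surjectivity of Jacobian-ring multiplication maps on the $\chi_i$-eigenspace of the VHS) is a legitimate substitute in principle, but it is only named, not carried out, and setting it up for the anti-invariant cohomology of a double cover branched along a reducible nodal curve is itself a nontrivial piece of work (one would have to pass to the resolution or to the weighted-projective hypersurface model). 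As written, the proposal does not contain a proof of the ``only if'' direction of the theorem.
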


The computation relies on the eigenspace decomposition of $H^2(S,\mathbb Q)$
under the $(\mathbb Z/2)^2$-action and on a detailed analysis of the Hodge
structure associated to the intermediate double covers determined by the three
involutions of $S$. One consequence is that, for general choices of branch curves, the
surface $S$ carries no Ulrich line bundles with respect to $\mathcal{O}_S(1)$: the Néron--Severi group is too small for such a
bundle to exist (cf. Theorem \ref{line bundles L}). The smallest integer $r\geq 1$ such that there exists a rank $r$ Ulrich bundle on $S$ with respect to $H$ is called the \emph{Ulrich complexity} and is denoted by $\operatorname{uc}(S, H)$.
Our second main theorem establishes lower bounds on the Ulrich complexity of bidouble planes.

\begin{theorem}\label{Theorem B}
Let $S$ be a smooth bidouble plane of $\mathbb{P}^2$ branched along generic smooth curves of degrees $n_1, n_2, n_3$, and assume $n_1 \le n_2 \le n_3$. Define
\[
T_1 = \{(0,4,2n)\mid n\ge 2\}\;\cup\;\{(2,2,2n)\mid n\ge 1\}, 
\qquad\text{and}\qquad
T_2 = \{(0,2,2),\, (0,2,4)\}.
\]
If $S$ is an odd bidouble cover, then its Ulrich complexity with respect to $\mathcal{O}_S(1)$ satisfies $\operatorname{uc}(S,\mathcal{O}_S(1))>1$. 

\noindent If $S$ is an even bidouble cover, then:
\begin{itemize}
    \item[(a)] If $(n_1,n_2,n_3)\notin T_1\cup T_2$, then $\operatorname{uc}(S,\mathcal{O}_S(1)) > 1$.
    \item[(b)] If $(n_1,n_2,n_3)\in T_2$, then
    $\operatorname{uc}(S,\mathcal{O}_S(1)) = 1$.
\end{itemize}
\end{theorem}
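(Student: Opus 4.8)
The plan is to reduce every assertion to the numerical and cohomological criteria for Ulrich bundles, exploiting the finite morphism $\pi\colon S\to\P^2$ and the three intermediate double covers $p_i\colon S\to Y_i$. First I would record the invariants: since $\pi$ has degree $4$, $H^2=\pi^*\O_{\P^2}(1)^2=4$, and Hurwitz's formula together with the ramification curves $\Delta_i$ over $D_i$ (which satisfy $\pi^*D_i=2\Delta_i$, hence $[\Delta_i]=\tfrac{n_i}{2}H$ numerically and $\Delta_i\cdot H=2n_i$) gives $K_S=\pi^*K_{\P^2}+\sum_i\Delta_i$, i.e. $K_S\equiv(\tfrac{N}{2}-3)H$ with $N=n_1+n_2+n_3$. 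Because $\pi$ is finite, the projection formula yields $H^\bullet(S,\mathcal{E}(-jH))=H^\bullet(\P^2,(\pi_*\mathcal{E})(-j))$, so $\mathcal{E}$ is Ulrich on $(S,H)$ iff $\pi_*\mathcal{E}$ is Ulrich on $(\P^2,\O(1))$ (and likewise through each $p_i$ onto $(Y_i,H_i)$, using $H=p_i^*H_i$). Evaluating Hirzebruch--Riemann--Roch with these invariants, a rank $r$ Ulrich bundle must satisfy $c_1(\mathcal{E})\cdot H=rN$; for a line bundle $L$ the two conditions $\chi(L(-1))=\chi(L(-2))=0$ become $L\cdot H=N$ and $L^2=\tfrac{N^2}{2}-3N+8-2\chi(\O_S)$, where $\chi(\O_S)=1+\sum_{i=1}^3\tfrac{(l_i-1)(l_i-2)}{2}$ is read off from $\pi_*\O_S=\O\oplus\bigoplus_i\O(-l_i)$.

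\emph{The odd covers.} Here I would show no Ulrich line bundle exists. The required equality $L\cdot H=N$ has $N$ odd, whereas, granting the description of $\mathrm{NS}(S)$ from Theorem~\ref{line bundles L}, every algebraic class meets $H$ in an even number: the classes $H$ (with $H^2=4$), the ramification classes $\Delta_i$ (with $\Delta_i\cdot H=2n_i$), and the Prym/vanishing classes contributed by the intermediate covers (which lie in $H^\perp$) map into $2\Z$ under $D\mapsto D\cdot H$. Thus $L\cdot H=N$ is unattainable and $\operatorname{uc}(S,\O_S(1))>1$; this is exactly the obstruction isolated in Theorem~\ref{line bundles L}.

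\emph{Existence of a special rank two bundle on even covers.} The uniform bound $\operatorname{uc}\le 2$ follows once I produce a rank two Ulrich bundle on every even $S$. Its first Chern class is forced to be the special one, $c_1(\mathcal{E})=K_S+3H\equiv\tfrac{N}{2}H=\sum_i[\Delta_i]$, which is integral precisely because all $n_i$ are even; this is where the even hypothesis enters and where the construction breaks down in the odd case. I would realize $\mathcal{E}$ as an extension
\[
0\longrightarrow \O_S(A)\longrightarrow \mathcal{E}\longrightarrow \O_S(B)\longrightarrow 0,\qquad A+B\equiv K_S+3H,\quad A\cdot B=c_2^{\mathrm{Ulr}},
\]
which is automatically locally free, with $A,B$ assembled from pullbacks $\pi^*\O(a)$ and the half-branch classes $\Delta_j$ and with extension class coming from the covering data. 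Verifying the Ulrich vanishing $H^\bullet(\mathcal{E}(-1))=H^\bullet(\mathcal{E}(-2))=0$ then reduces, through the long exact sequences and the splitting $p_{i*}\O_S=\O_{Y_i}\oplus\mathcal{M}_i^{-1}$, to cohomology on $Y_i$ (or, after a further pushforward, on $\P^2$), where the relevant twists should land among $\O(-1),\O(-2)$ and acyclic sheaves. I expect this step --- making the extension Ulrich, not merely of the correct Chern class --- to be the main obstacle, since Ulrich-ness is non-numerical and the surfaces $S$ range over del Pezzo, Enriques, K3, and general type as the $n_i$ grow; evenness is what keeps the building classes integral and the vanishing uniform.

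\emph{Pinning the complexity.} To upgrade $\le 2$ to $=2$ in case (a) I must exclude Ulrich line bundles. When $\rho(S)=1$ (even triples outside the list of Theorem~\ref{Main Theorem}) every $L$ is numerically $aH$, so $L\cdot H=N$ forces $4\mid N$ and $a=N/4$, whence $L^2=N^2/4$; comparing with the Riemann--Roch value above gives $\chi(\O_S)=\tfrac{(N-4)(N-8)}{8}$, an equality I expect to fail for all such triples. For the remaining family $\{(0,2,2n):n\ge 3\}$ (which has $\rho>1$ yet lies outside $T_1\cup T_2$) the numerics alone do not obstruct --- $(L\cdot H,L^2)=(2n+2,2n)$ satisfies the Hodge index inequality --- so I would use the geometry: $D_1=\varnothing$ exhibits $S$ as a double cover of the quadric $Y_3\cong\P^1\times\P^1$, describe $\mathrm{NS}(S)$ via the two rulings and the Prym class, and check that once $n\ge 3$ no class with these invariants passes the cohomological Ulrich test; this is the second delicate point. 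Finally, for $T_2$ I would identify $S$ directly: $(0,2,2)$ gives $K_S\equiv -H$, a degree-four del Pezzo surface, and $(0,2,4)$ gives $K_S\equiv 0$ with $\chi(\O_S)=2$, a quartic K3, and on each the extra Néron--Severi classes furnished by the two conic double covers produce an explicit Ulrich $L$ with $(L\cdot H,L^2)=(4,2)$, resp. $(6,4)$, giving $\operatorname{uc}=1$. For $T_1$ the rank two bundle already yields $\operatorname{uc}\le 2$, and I would leave the line-bundle question open, as the candidate classes there sit at the boundary of the existence range of Theorem~\ref{line bundles L}.
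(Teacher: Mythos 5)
Your overall architecture matches the paper's: rule out Ulrich line bundles via the numerical criteria and the Picard-number computation, construct a special rank-two bundle on even covers, and exhibit explicit line bundles for $T_2$. However, the central existence step is carried out by a construction that cannot work. You propose to realize the rank-two bundle as an extension $0\to\O_S(A)\to\mathcal E\to\O_S(B)\to 0$ of line bundles with $A+B\equiv K_S+3H$ and $A\cdot B$ equal to the required second Chern class. For a general even cover outside the exceptional list one has $\rho(S)=1$, so $A\equiv aH$ and $B\equiv bH$ with $a+b=m$ (where $m=\tfrac12\sum n_i$), while Corollary~\ref{Ulrichspecial} forces $c_2(\mathcal E)=m^2+m_1^2+m_2^2+m_3^2$ with $m_i=n_i/2$. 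Then $(a-b)^2=(a+b)^2-4ab=m^2-c_2(\mathcal E)=-(m_1^2+m_2^2+m_3^2)<0$, so no such pair $(A,B)$ exists, not even with rational coefficients; assembling $A,B$ from the half-branch classes $\Delta_j\equiv m_jH$ does not help, since these are still numerically proportional to $H$. This is precisely why the paper uses the Cayley--Bacharach/Serre construction (Theorem~\ref{CBproperty}): there the quotient is $\mathcal I_Z(mH)$ for a zero-dimensional subscheme $Z$ with $\#Z=c_2(\mathcal E)$, and $\#Z$ is free to exceed any product of line-bundle classes. Without replacing your extension by such a Serre-type construction (and then proving $h^0(S,\mathcal E(-H))=0$, which the paper does by a degree count of $Z$ against $E_1$), the bound $\operatorname{uc}(S,H)\le 2$ is not established, and with it cases (a) and (b) collapse.

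There are also several exclusions that you only ``expect'' rather than prove. The paper's Lemma~\ref{non-existence} shows that on a Picard-rank-one even cover the two Ulrich equalities reduce to $n_1^2+n_2^2+n_3^2=0$, a clean contradiction; your check that $\chi(\O_S)=\tfrac{(N-4)(N-8)}{8}$ fails is fillable but must actually be carried out. For $(0,2,2n)$ with $n\ge3$ the obstruction is not cohomological but lattice-theoretic: writing $m\L=\pi_1^*\O_{\P^1\times\P^1}(a,b)$, the two Ulrich equalities give a quadratic in $a$ with discriminant $4m^2(n^2+1)$, never a perfect square (Proposition~\ref{(0,2,2n)}); your Hodge-index test does not detect this. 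Finally, in the odd case your claim that every algebraic class meets $H$ evenly is unjustified for the triples $(1,3,2n+1)$, where $\rho(S)>1$ and the extra N\'eron--Severi classes coming from the intermediate quotient need not lie in $H^\perp$; the paper's Lemma~\ref{odd line bundles} instead uses the parity of $2c_1(\L)\cdot K_S=n(n-6)\operatorname{rk}(\L)$, which works uniformly for all odd ranks. Citing Theorem~\ref{line bundles L} directly, as the paper's proof does, would also suffice there.
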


The paper is organized as follows. In Section \ref{section2}, we recall the basic facts about Ulrich bundles and bidouble covers. In Section \ref{section3}, we study the Nerón--Severi group of bidouble planes: first, we show that the Picard number of a bidouble plane is determined by those of its intermediate covers; second, we establish the numerical conditions under which the Picard number is strictly larger than one, leading to the proof of Theorem \ref{Main Theorem}. In Section \ref{section4},
we show that the existence of an Ulrich line bundle forces the Picard number to be greater than one. For double planes, it was shown in \cite{PN21} that the only obstruction is having Picard number one.  
In contrast, we exhibit families of bidouble covers with Picard number greater than one that nevertheless admit no Ulrich line bundles. On the positive side, we also construct explicit Ulrich line bundles in the cases $(n_1,n_2,n_3)=(0,2,2)$ and $(0,2,4)$.

\begin{remark}
We restrict to bidouble covers of $\mathbb{P}^2$ for the following reasons.
Most of our non-existence results for Ulrich line bundles hinge on showing that the Picard number is one, which in turn forces the base surface to have a Picard number one. Although some arguments extend to more general bases, proving that the intermediate quotients also have Picard number one (cf.~Lemma~\ref{NS n1+n2>6}) requires realizing them as hypersurfaces in a weighted projective space. This situation arises precisely when the base is $\mathbb{P}^2$. Notice that one of our results (cf.~Proposition~\ref{(0,2,2n)}) establishes the nonexistence of Ulrich line bundles on double covers of $\mathbb{P}^1 \times \mathbb{P}^1$ branched along symmetric divisors, a fact that has not been previously recorded in the literature.
\end{remark}

%%%%%%%%%%%%%%%%%%%%%%
%%%%%%%%%%%%%%%%%%%%%%

\section*{Acknowledgements} We thank Pedro Montero, Joan Pons-Llopis, and Roberto Villaflor for useful discussions. In addition, we thank Jorge Duque, Francesco Malaspina, Johannes Rau, David Rohrlich, and Giancarlo Urz\'ua for their comments on a previous version of this manuscript.
The first author was supported by Simons Foundation grant no.\ 550023, the second author was partially supported by the Master's Program, Universidad del Valle, and the third author was partially supported by the PRIN project “Multilinear Algebraic Geometry” of MUR (2022ZRRL4C). The third author also extends thanks to Rijksuniversiteit Groningen/Campus Fryslân for their hospitality. 

%%%%%%%%%%%%%%%%%%%%%%%%%
%%%%%%%%%%%%%%%%%%%%%%%%%

\section{Preliminaries}\label{section2}

All varieties in this work are defined over the complex numbers $\mathbb{C}$.

\subsection{Ulrich bundles}\label{UBundles}

In recent years, considerable attention has been devoted to a class of vector bundles known as Ulrich bundles. For a smooth projective variety with an embedding $X \subset \mathbb{P}(|H|)$ given by a very ample divisor $H$, a vector bundle $\mathcal{E}$ on $X$ is called Ulrich with respect to $H$ if
$
H^i(X, \mathcal{E}(-jH)) = 0 $ for all $i \ge 0$ and $1 \le j \le \dim X$.
 We refer the reader to \cite{joan} for a comprehensive introduction to Ulrich bundles.

In this paper, we adopt a slightly more general perspective: rather than assuming that the polarization is very ample, we only require it to be ample and globally generated.

\begin{definition}\label{Ulrichdef}
Let $X$ be a smooth projective variety of dimension $n \ge 1$, and let $H$ be an ample and globally generated line bundle on $X$. A vector bundle $\mathcal{E}$ on $X$ is said to be Ulrich with respect to $H$ if
\[
H^i(X, \mathcal{E}(-jH)) = 0,
\]
for all $i \ge 0$ and $1 \le j \le n$.
\end{definition}

Some considerations justify this more general framework. Firstly, numerous essential properties of classical Ulrich bundles remain valid under these assumptions. Secondly, bidouble covers of $\mathbb{P}^2$ naturally possess polarizations that are ample and globally generated, but not necessarily very ample. For these reasons and following the convention in \cite[Section 1]{Takao}, we call a pair $(X, H)$ a \textit{polarized variety} if $X$ is a smooth variety and $H$ is an ample and globally generated line bundle.

 \begin{definition}\label{complexity}
    The smallest $r\geq 1$ such that there is a rank $r$ Ulrich bundle on the polarized variety $(X, H)$ is called the \textit{Ulrich complexity} of $X$, denoted $\operatorname{uc}(X, H)$.
\end{definition}

When focusing on surfaces, the Ulrich condition of a rank $r$ vector bundle $\mathcal{E}$ reduces to two cohomological vanishing together with two conditions on the Chern classes of $\mathcal{E}$, as follows. 

\begin{proposition}[{\cite[Proposition 2.1]{Casnon}}]\label{iff Ulrich}
Let $(S,H)$ be a smooth polarized  surface. For a vector bundle $\mathcal{E}$ on $S$, the following assertions are equivalent:
\begin{enumerate}
    \item $\mathcal{E}$ is an Ulrich bundle,
    \item $\mathcal{E}^{\vee}(3H+K_S)$ is an Ulrich bundle,
    \item  $\mathcal{E}$ is an aCM bundle and
   \begin{align}
c_1(\mathcal{E})\cdot H 
   &= \tfrac{\operatorname{rk}(\mathcal{E})}{2}(3H+K_S)\cdot H,\label{ulrich-equalities1}\\
c_2(\mathcal{E}) 
   &= \tfrac{1}{2}\big(c_1^2(\mathcal{E}) - c_1(\mathcal{E})\cdot  K_S\big)
   - \operatorname{rk}(\mathcal{E})(H^2 - \chi(\mathcal{O}_S)),\label{ulrich-equalities2}
   \end{align}
    \item $h^0(S,\mathcal{E}(-H))=h^0(S,\mathcal{E}^{\vee}(2H+K_S))=0$ and Equalities \eqref{ulrich-equalities1} and \eqref{ulrich-equalities2} hold.
\end{enumerate}
\end{proposition}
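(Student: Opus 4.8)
The plan is to establish the cycle $(1)\Rightarrow(3)\Rightarrow(1)$ and $(1)\Rightarrow(4)\Rightarrow(1)$, together with $(1)\Leftrightarrow(2)$, using three ingredients: Serre duality on the surface, the Riemann--Roch formula, and a finite projection $S\to\mathbb{P}^2$. Throughout I write $r=\operatorname{rk}(\mathcal{E})$ and recall that the Ulrich condition for a surface amounts to the vanishing of $H^i(S,\mathcal{E}(-jH))$ for all $i$ and $j\in\{1,2\}$.

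The equivalence $(1)\Leftrightarrow(2)$ is purely formal. Writing $\mathcal{F}=\mathcal{E}^\vee(3H+K_S)$ and applying Serre duality $H^i(S,\mathcal{G})\cong H^{2-i}(S,\mathcal{G}^\vee\otimes\omega_S)^\vee$ with $\omega_S=\mathcal{O}_S(K_S)$, one checks that $H^i(S,\mathcal{F}(-H))\cong H^{2-i}(S,\mathcal{E}(-2H))^\vee$ and $H^i(S,\mathcal{F}(-2H))\cong H^{2-i}(S,\mathcal{E}(-H))^\vee$; hence the Ulrich vanishings for $\mathcal{F}$ at $j=1,2$ are exactly those for $\mathcal{E}$ at $j=2,1$. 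Next I would reduce the two Chern-class identities to a cohomological statement: expanding $\chi(\mathcal{E}(-jH))$ by Riemann--Roch produces a quadratic polynomial in $j$ with leading coefficient $\tfrac{r}{2}H^2$, and a short computation shows that the system $\chi(\mathcal{E}(-H))=\chi(\mathcal{E}(-2H))=0$ is equivalent to Equalities \eqref{ulrich-equalities1} and \eqref{ulrich-equalities2}. This identification is what links the numerical conditions in (3) and (4) to the Euler characteristics governing the Ulrich vanishing.

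With these reductions, the direction $(4)\Rightarrow(1)$ runs as follows. Condition (4) gives $h^0(S,\mathcal{E}(-H))=0$ and $h^0(S,\mathcal{E}^\vee(2H+K_S))=0$, the latter being $h^2(S,\mathcal{E}(-2H))=0$ by Serre duality. Choosing a nonzero section $s\in H^0(S,\mathcal{O}_S(H))$, which exists because $H$ is ample and globally generated, and multiplying by $s$ yields injections of sheaves $\mathcal{E}(-2H)\hookrightarrow\mathcal{E}(-H)$ and $\mathcal{E}^\vee(H+K_S)\hookrightarrow\mathcal{E}^\vee(2H+K_S)$. Passing to global sections (and using Serre duality once more for the second) propagates the two vanishings to $h^0(S,\mathcal{E}(-2H))=0$ and $h^2(S,\mathcal{E}(-H))=0$. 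Combined with $\chi(\mathcal{E}(-jH))=0$ this forces $h^1(S,\mathcal{E}(-jH))=0$ as well for $j=1,2$, so all six cohomology groups vanish and $\mathcal{E}$ is Ulrich. The reverse $(1)\Rightarrow(4)$ is immediate: the two $h^0$'s vanish as special cases of the Ulrich vanishing (with Serre duality for the second), and the Chern identities hold by the equivalence just noted. Likewise $(3)\Rightarrow(1)$ is easy, since the aCM hypothesis directly kills $h^1(S,\mathcal{E}(-jH))$ and then $\chi=0$ forces $h^0=h^2=0$.

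The genuine geometric content, and the step I expect to be the main obstacle, is $(1)\Rightarrow(3)$, specifically the aCM property $H^1(S,\mathcal{E}(tH))=0$ for all $t$, which is invisible from the finitely many Ulrich vanishings alone. Here I would use that $H$ ample and globally generated yields a finite morphism $\pi\colon S\to\mathbb{P}^2$ with $\pi^*\mathcal{O}_{\mathbb{P}^2}(1)=H$. Since $\pi$ is finite, the projection formula together with $R^{>0}\pi_*=0$ gives $H^i(\mathbb{P}^2,(\pi_*\mathcal{E})(t))\cong H^i(S,\mathcal{E}(tH))$ for all $i,t$, so $\pi_*\mathcal{E}$ is an Ulrich sheaf on $\mathbb{P}^2$. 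Invoking the classical fact that Ulrich sheaves on projective space are trivial, $\pi_*\mathcal{E}\cong\mathcal{O}_{\mathbb{P}^2}^{\oplus m}$, whose intermediate cohomology vanishes in every twist; transporting this back along $\pi$ delivers the aCM property, and the Chern-class identities then follow from $\chi(\mathcal{E}(-jH))=0$ as above. The care required is precisely in justifying the finiteness of $\pi$ and the triviality of Ulrich sheaves on $\mathbb{P}^2$ in this mildly generalized (ample and globally generated, rather than very ample) setting.
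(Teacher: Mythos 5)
The paper does not prove this proposition; it is quoted verbatim from Casnati \cite[Proposition 2.1]{Casnon}, so there is no internal proof to compare against. Judged on its own terms, your argument is correct and is essentially the standard proof of this characterization (the same circle of ideas used in \cite{ESW03} and in \cite{joan}): Serre duality gives $(1)\Leftrightarrow(2)$; Riemann--Roch shows that the pair of Chern identities is exactly the system $\chi(\mathcal{E}(-H))=\chi(\mathcal{E}(-2H))=0$; multiplication by a section of $H$ propagates the two vanishings in (4) to all of $h^0(\mathcal{E}(-jH))$ and $h^2(\mathcal{E}(-jH))$ for $j=1,2$, after which $\chi=0$ kills $h^1$; and the only nontrivial implication, the aCM property in $(1)\Rightarrow(3)$, is obtained by pushing forward along a finite map to $\mathbb{P}^2$ and using that Ulrich sheaves on projective space are trivial.

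Two points you flagged deserve to be made precise, and both go through. First, the finite morphism $\pi\colon S\to\mathbb{P}^2$ is not given by the full linear system $|H|$ (which maps to $\mathbb{P}^{h^0(H)-1}$) but by a general net $V\subset H^0(S,H)$: since $|H|$ is base-point free and ample, $S$ maps finitely onto a surface in $\mathbb{P}^{h^0(H)-1}$, and a general linear projection to $\mathbb{P}^2$ from a center disjoint from the image remains finite; this is exactly where the hypothesis ``ample and globally generated'' (rather than very ample) suffices. Second, the triviality of $\pi_*\mathcal{E}$ follows from the Beilinson spectral sequence: since $\pi$ is finite, $H^i(\mathbb{P}^2,(\pi_*\mathcal{E})(t))\cong H^i(S,\mathcal{E}(tH))$, and the vanishing of $H^q((\pi_*\mathcal{E})(-1))$ and $H^q((\pi_*\mathcal{E})(-2))$ for all $q$ collapses the spectral sequence to $\pi_*\mathcal{E}\cong H^0(\pi_*\mathcal{E})\otimes\mathcal{O}_{\mathbb{P}^2}$, whence $H^1(S,\mathcal{E}(tH))=H^1(\mathbb{P}^2,\mathcal{O}(t))^{\oplus m}=0$ for all $t$. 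With these two justifications supplied, your proof is complete.
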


\subsection{Bidouble covers of surfaces}

We provide only a brief exposition on bidouble covers on surfaces. For a comprehensive treatment, we refer the reader to \cite{Cat84, PAC}.

Let $\pi:X\to Y$ be a finite cover of $Y$ with Galois group $G\coloneqq\operatorname{Gal}(\mathbb{C}(X)/\mathbb{C}(Y))\cong(\mathbb{Z}/2\mathbb{Z})^2$.
Such a map is called a \textit{bidouble cover} of $Y$. If $X$ is normal, then 
\[
\pi_*\mathcal{O}_X\cong \mathcal{O}_Y\oplus\bigoplus_{i=1}^3\mathcal{L}_{\sigma_i},
\]
where $\sigma_1$ and $\sigma_2$ are generators of $G$ and $\sigma_3=\sigma_1\sigma_2$, and each $\mathcal{L}_{\sigma_i}$ is a line bundle on $Y$ determined by $\sigma_i$. Thus,
$X=\operatorname{Spec}_{\mathcal{O}_Y}\left(\pi_*\mathcal{O}_X\right)$,
where the structure of $\mathcal{O}_{S}$-algebra is given by maps $\mathcal{L}_{\sigma_i}\oplus \mathcal{L}_{\sigma_j}\to \mathcal{L}_{\sigma_i+\sigma_j}$.

\begin{proposition}[{\cite[Proposition 2.3]{Cat84}}]\label{Pcatanese}
Let $\pi\colon X\to Y$ be a smooth bidouble cover. Then $\pi$ is uniquely determined by six effective divisors $D_1,D_2,D_3$, and $L_1,L_2,L_3$ on $Y$ such that 
\begin{enumerate}
    \item $2L_i\equiv D_j+D_k$, and
    \item $L_i+D_i\equiv L_j+L_k$,   
\end{enumerate}
for every permutation $(i,j,k)$ of $(1,2,3)$.

\noindent Moreover, the branch locus of $\pi$ is $D=\bigcup_{i=1}^3D_i$, which must be a normal crossing divisor on $Y$.
\end{proposition}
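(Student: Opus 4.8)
The plan is to recover the building data from the $G$-eigensheaf decomposition of $\pi_*\mathcal{O}_X$, to translate the sheaf-of-algebras structure into the two linear-equivalence relations, and finally to characterize smoothness by a local analysis of the branch locus.

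First I would decompose $\pi_*\mathcal{O}_X$ into isotypic pieces for the action of $G=(\mathbb{Z}/2\mathbb{Z})^2$. Since we work over $\mathbb{C}$ and $|G|$ is invertible, $\pi_*\mathcal{O}_X=\bigoplus_{\chi\in\widehat{G}}\mathcal{F}_\chi$, with invariant part $\mathcal{F}_{\chi_0}=\mathcal{O}_Y$. As $X$ is normal, $\mathcal{O}_X$ is reflexive and $\pi$ is finite, so $\pi_*\mathcal{O}_X$ is reflexive; hence so is each direct summand $\mathcal{F}_\chi$. Because $\pi$ is a $G$-cover and $X,Y$ are normal, $\pi$ is generically étale, so the four characters occur with multiplicity one at the generic point and each $\mathcal{F}_\chi$ has rank $1$; being reflexive of rank $1$ over the smooth surface $Y$, each $\mathcal{F}_\chi$ is a line bundle. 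Indexing the three nontrivial characters by $\chi_i$ with $\ker\chi_i=\langle\sigma_i\rangle$, I set $\mathcal{F}_{\chi_i}=\mathcal{L}_{\sigma_i}=\mathcal{O}_Y(-L_i)$, which defines the classes $L_i$.

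Next I would extract the $D_i$ from the algebra structure. Multiplication on $\pi_*\mathcal{O}_X$ is $G$-equivariant, hence restricts to maps $\mathcal{F}_{\chi_i}\otimes\mathcal{F}_{\chi_j}\to\mathcal{F}_{\chi_i\chi_j}$. For distinct $i,j$ one has $\chi_i\chi_j=\chi_k$, and the map $\mathcal{O}_Y(-L_i-L_j)\to\mathcal{O}_Y(-L_k)$ is a global section $\delta_k\in H^0\bigl(Y,\mathcal{O}_Y(L_i+L_j-L_k)\bigr)$; I set $D_k:=\operatorname{div}(\delta_k)$, an effective divisor with $L_k+D_k\equiv L_i+L_j$, which is relation (2). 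Writing $w_i$ for the tautological generator of the summand $\mathcal{F}_{\chi_i}$, the squares give sections $a_i\in H^0(Y,\mathcal{O}_Y(2L_i))$, and comparing $(w_i^2)w_j$ with $w_i(w_iw_j)$ yields the associativity identity $a_i=\delta_j\delta_k$ up to a unit. Taking divisors gives $\operatorname{div}(a_i)=D_j+D_k\in|2L_i|$, which is relation (1). Conversely, the classes $L_i$ and the sections $\delta_i$ are read off from $\pi_*\mathcal{O}_X$, the whole algebra structure is reconstructed from the $\delta_i$ through these maps, and associativity holds precisely because of (1) and (2); thus the data $(D_i,L_i)$ determine $\pi$ uniquely (and conversely build it via $X=\operatorname{Spec}_{\mathcal{O}_Y}\bigl(\mathcal{O}_Y\oplus\bigoplus_i\mathcal{O}_Y(-L_i)\bigr)$).

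Finally, for the branch-locus statement I would pass to local equations: in suitable trivializations $X$ is cut out near a point $p\in Y$, with fiber coordinates $w_1,w_2,w_3$, by $w_i^2=\delta_j\delta_k$ and $w_iw_j=\delta_k\,w_k$, where I also write $\delta_i$ for a local equation of $D_i$. Off $D=\bigcup_iD_i$ all $\delta_i$ are units and $\pi$ is étale. At a point lying only on $D_i$ the cover is, up to an étale factor, the double cover $w^2=\delta_i$, smooth iff $D_i$ is smooth there; at a point of $D_j\cap D_k$ the local model is $\{w_j^2=\delta_k u,\ w_k^2=\delta_j u'\}$, smooth iff $\delta_j,\delta_k$ form part of a coordinate system, i.e. iff $D_j$ and $D_k$ cross transversally; and at a common point of all three $D_i$ the section $a_i=\delta_j\delta_k$ vanishes to order $2$ and the total space is singular. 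Hence $X$ is smooth exactly when the $D_i$ are smooth, pairwise transversal, and share no common point, that is, when $D$ is a normal crossing divisor. The main obstacle is this last local computation: applying the Jacobian criterion at the double points to force transversality and, above all, ruling out triple points is where the smoothness hypothesis is genuinely used, whereas the eigensheaf and algebra steps are essentially bookkeeping once invertibility of the $\mathcal{F}_\chi$ is in hand.
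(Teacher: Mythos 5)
The paper does not prove this proposition; it is imported verbatim from \cite[Proposition 2.3]{Cat84}, and the only supporting material in Section~\ref{section2} is the eigensheaf decomposition $\pi_*\mathcal{O}_X\cong\mathcal{O}_Y\oplus\bigoplus_i\mathcal{L}_{\sigma_i}$ together with the algebra maps $\mathcal{L}_{\sigma_i}\otimes\mathcal{L}_{\sigma_j}\to\mathcal{L}_{\sigma_i\sigma_j}$ --- exactly the starting point of your argument. Your reconstruction (character decomposition of $\pi_*\mathcal{O}_X$ into line bundles, reading off the $D_i$ and the relations (1)--(2) from the multiplication/associativity of the $\mathcal{O}_Y$-algebra, and the local Jacobian analysis showing smoothness of $X$ forces the $D_i$ to be smooth, pairwise transverse, and without triple points) is correct and is essentially Catanese's original proof.
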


\begin{definition}\label{even}
A \emph{smooth bidouble plane} is a smooth bidouble cover $\pi \colon X \to \P^{2}$,
ramified along three smooth curves $D_{1},D_{2},D_{3}$ of degrees $n_{1},n_{2},n_{3}$.  
By Proposition~\ref{Pcatanese}, the relations between the divisors $D_{i}$ and the line bundles $L_{i}$ imply that the integers $n_{1},n_{2},n_{3}$ have the same parity.  

We say that $X$ is \emph{even} (respectively, \emph{odd}) if each $n_{i}$ is even (respectively, $n_{i}$ is odd).
\end{definition}

The following proposition describes several invariants associated with a bidouble plane. A detailed proof, as well as a more general formulation, can be found in \cite{Cat84}.

\begin{proposition}\label{invariants}
Let $\pi: X\to \mathbb{P}^2$ be a smooth bidouble plane defined as in Proposition \ref{Pcatanese} with $\operatorname{deg}(D_i)=n_i$. Then
\begin{enumerate}
\item  $\pi_*\mathcal{O}_S\cong \mathcal{O}_{\mathbb{P}^2}\oplus\mathcal{O}_{\mathbb{P}^2}{(\frac{-n_2-n_3}{2})}\oplus\mathcal{O}_{\mathbb{P}^2}{(\frac{-n_1-n_3}{2})}\oplus\mathcal{O}_{\mathbb{P}^2}{(\frac{-n_1-n_2}{2})}$,
    \item $K_S\equiv \pi^*(\mathcal{O}_{\mathbb{P}^2}(-3))+R$, where $2R=\pi^*(\mathcal{O}_{\mathbb{P}^2}(n_1+n_2+n_3))$, and $K_S^2=(n_1+n_2+n_3-6)^2$,
    \item  $q(S)=0$,
    \item $\chi(\O_S)=4+\frac{1}{4}(\sum_{i=1}^{3}n^{2}_i+n_1n_2+n_1n_3+n_2n_3-6\sum_{i=1}^{3}n_i)$.
\end{enumerate}  
\end{proposition}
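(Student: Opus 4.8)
The plan is to deduce all four invariants from the single structural input of part (1) — the splitting of $\pi_*\mathcal{O}_S$ — together with the fact that $\pi$ is finite flat. First I would establish (1). By Proposition~\ref{Pcatanese} the cover is determined by data $(D_i,L_i)$ with $2L_i\equiv D_j+D_k$; since $\operatorname{Pic}(\mathbb{P}^2)\cong\mathbb{Z}$, each $L_i$ is $\mathcal{O}_{\mathbb{P}^2}(\ell_i)$ with $\ell_i=\tfrac12(n_j+n_k)$, which is an integer because the $n_i$ share a common parity. The structure theorem for normal abelian covers gives $\pi_*\mathcal{O}_S\cong\mathcal{O}_{\mathbb{P}^2}\oplus\bigoplus_i L_i^{-1}$, and substituting the degrees $\ell_1=\tfrac12(n_2+n_3)$, $\ell_2=\tfrac12(n_1+n_3)$, $\ell_3=\tfrac12(n_1+n_2)$ yields the stated decomposition.

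For (3) and (4) I would use that $\pi$ is finite, so $R^q\pi_*\mathcal{O}_S=0$ for $q>0$ and hence $H^i(S,\mathcal{O}_S)=H^i(\mathbb{P}^2,\pi_*\mathcal{O}_S)$ for all $i$. Since $H^1(\mathbb{P}^2,\mathcal{O}_{\mathbb{P}^2}(d))=0$ for every $d$, part (1) immediately forces $q(S)=h^1(\mathcal{O}_S)=0$, giving (3). For (4) I would compute $\chi(\mathcal{O}_S)=\chi(\mathcal{O}_{\mathbb{P}^2})+\sum_i\chi(\mathcal{O}_{\mathbb{P}^2}(-\ell_i))$ via $\chi(\mathcal{O}_{\mathbb{P}^2}(d))=\binom{d+2}{2}$, so that $\chi(\mathcal{O}_S)=1+\tfrac12\sum_i(\ell_i-1)(\ell_i-2)$; substituting $\ell_i=\tfrac12(n_j+n_k)$ and simplifying with the identities $\sum_i(n_j+n_k)^2=2\sum_i n_i^2+2\sum_{i<j}n_in_j$ and $\sum_i\ell_i=n_1+n_2+n_3$ produces the claimed quartic expression. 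This is the most calculation-heavy step, but it is entirely routine.

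For (2) I would invoke the Hurwitz relation $K_S\equiv\pi^*K_{\mathbb{P}^2}+R=\pi^*\mathcal{O}_{\mathbb{P}^2}(-3)+R$ for the ramification divisor $R$, the delicate point being the correct identification of $R$. Over a general point of the smooth branch curve $D_i$ the inertia group is the single involution $\sigma_i$, so the fibre consists of two points each with ramification index $2$; writing $R_i=(\pi^{-1}D_i)_{\mathrm{red}}$ gives $\pi^*D_i=2R_i$, and since every ramification index equals $2$ the ramification divisor is reduced, $R=R_1+R_2+R_3$. Hence $2R=\sum_i\pi^*D_i=\pi^*\mathcal{O}_{\mathbb{P}^2}(n_1+n_2+n_3)$, which is the asserted description of $R$. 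For the self-intersection I would note $K_S\equiv_{\mathbb{Q}}\tfrac12\,\pi^*\mathcal{O}_{\mathbb{P}^2}(n_1+n_2+n_3-6)$ and apply the projection formula $(\pi^*\mathcal{O}_{\mathbb{P}^2}(1))^2=\deg(\pi)\cdot H^2=4$, so that $K_S^2=\tfrac14(n_1+n_2+n_3-6)^2\cdot 4=(n_1+n_2+n_3-6)^2$.

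The main obstacle is precisely the half-integral nature of $R$ in (2): the divisor $R$ itself is not the pullback of any line bundle on $\mathbb{P}^2$ — only $2R$ is — and correctly tracking this factor of $\tfrac12$ is exactly what separates the right answer $(n_1+n_2+n_3-6)^2$ from the spurious $4(n_1+n_2+n_3-3)^2$ one would obtain by mistakenly treating $R$ as $\pi^*(L_1+L_2+L_3)$. Pinning down the inertia structure of the bidouble cover — two ramification-index-$2$ points over each $D_i$, rather than a single one — is the conceptual crux; once it is in place, every numerical identity reduces to Riemann--Roch on $\mathbb{P}^2$ and the projection formula.
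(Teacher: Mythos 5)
Your proposal is correct, and it is essentially the standard derivation that the paper itself omits and delegates to \cite{Cat84}: part (1) from the building data $2L_i\equiv D_j+D_k$ and the splitting of $\pi_*\mathcal{O}_S$, parts (3)--(4) by pushing cohomology down to $\mathbb{P}^2$ along the finite map, and part (2) from the Hurwitz formula with the correct inertia analysis (two index-$2$ points over each $D_i$, so $\pi^*D_i=2R_i$ with $R=\sum_i R_i$ reduced). All the numerical checks go through, including $\chi(\mathcal{O}_S)=1+\tfrac12\sum_i(\ell_i-1)(\ell_i-2)$ reducing to the stated quartic expression.
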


\begin{remark}
\cite[Lemma~2.2]{Gar22} shows that, for a smooth bidouble plane $\pi\colon X \to \mathbb{P}^2$, at most one of the branch divisors $D_i$ can be trivial; otherwise the surface $S$ would be disconnected.
\end{remark}

%\begin{definition}
%\textcolor{red}{ We say that a bidouble cover $\pi\colon S \to \mathbb{P}^2$ carries an Ulrich bundle of rank $r$ if there is a rank $r$ vector bundle $\mathcal{E}$ that satisfy Definition \ref{Ulrichdef} for $H\colon=\pi*(L)$, where $L$ the class of a line in $\mathbb{P}^2$ }
%\end{definition}

%%%%%%%%%%%%%%%%%%%%%%%%%
%%%%%%%%%%%%%%%%%%%%%%%%%

\section{Nerón--Severi group of general bidouble planes}\label{section3} 
This section aims to compute the Picard number of a bidouble cover. 
A central feature of the construction of Catanese \cite[Section 2]{Cat84} is the existence of three intermediate quotients $Y_1, Y_2, Y_3$, corresponding to the quotients by the three index-two subgroups of $G$. Concretely, for a bidouble cover $\pi\colon S\to Y$ one has the commutative diagram
\begin{equation}\label{diagram}
\xymatrix{
&S \ar@{->}[dl]_{p_3}\ar@{->}[dr]^{p_{1}} \ar@{->}[d]^{p_{2}}&\\
Y_3 \ar@{->}[dr]_{\pi_3} & Y_2 \ar@{->}[d]^{\pi_2} & Y_1  \ar@{->}[dl]^{\pi_1}   \\
& Y, &}    
\end{equation}
where all the arrows are double covers. The intermediate quotient $Y_i$ is branched along $D_j+D_k$ whenever $(i,j,k)$ is a permutation of $(1,2,3)$.

The lemma below relates the Picard number of $S$ to the Picard numbers of the intermediate quotients. Its proof closely follows \cite[Corollary 3.17]{Gar22}, which we restate here for clarity.
\begin{lemma}\label{NS}
Let $\pi\colon  S\to \P^2$ be a smooth bidouble plane. Suppose that $\rho(Y_2)=\rho(Y_3)=1$, then $\rho(S)=\rho(Y_1)$. 
\end{lemma}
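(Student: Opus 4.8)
We want to show that if $\rho(Y_2)=\rho(Y_3)=1$, then $\rho(S)=\rho(Y_1)$. The natural engine is the eigenspace (isotypic) decomposition of $H^2(S,\Q)$ under the action of the Galois group $G\cong(\Z/2)^2$, combined with the fact that $q(S)=0$ (Proposition~\ref{invariants}(3)), so that $H^2(S,\Q)$ carries a pure Hodge structure whose $(1,1)$-part records the Picard number.

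\textbf{Step 1: Decompose $H^2(S,\Q)$ into $G$-isotypic pieces.} Since $G=(\Z/2)^2$ acts on $S$ with quotient $\P^2$, the group algebra $\Q[G]$ splits as a sum of the four one-dimensional characters: the trivial character $\chi_0$ and the three nontrivial characters $\chi_1,\chi_2,\chi_3$, where $\chi_i$ is the character whose kernel is the subgroup fixing $Y_i$. Averaging over $G$ gives a decomposition of Hodge structures
\[
H^2(S,\Q)\;=\;\bigoplus_{j=0}^{3} H^2(S,\Q)^{\chi_j}.
\]
The trivial part is $H^2(S,\Q)^{\chi_0}=p_i^*H^2(\P^2,\Q)$ (pulled back from the base, the same for every index), which is one-dimensional and of type $(1,1)$. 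I expect each nontrivial eigenspace $H^2(S,\Q)^{\chi_j}$ to be identified with the anti-invariant (under the relevant involution) part of $H^2(Y_j,\Q)$, via the pullback $p_j^\ast$. This is the key structural input, and it is exactly the kind of invariant-theoretic identification the introduction advertises.

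\textbf{Step 2: Translate $\rho$ into Hodge-theoretic dimensions.} Because $q(S)=0$, we have $\rho(S)=\dim H^{1,1}(S)\cap H^2(S,\Q)$ after tensoring suitably, and the same holds for each $Y_i$. The isotypic decomposition is compatible with the Hodge structure, so $\rho(S)$ is the sum over $j$ of the dimensions of the $(1,1)$-classes in each eigenspace. The trivial eigenspace contributes $1$ (the pullback of the hyperplane class). The point is now to read off the nontrivial contributions: under the identification from Step~1, the $(1,1)$-classes in $H^2(S,\Q)^{\chi_j}$ correspond to the anti-invariant algebraic classes on $Y_j$, i.e.\ to $\rho(Y_j)-1$ (subtracting the pulled-back hyperplane class, which is invariant). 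Assembling, $\rho(S)=1+\sum_{j=1}^3(\rho(Y_j)-1)$.

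\textbf{Step 3: Impose the hypotheses and conclude.} Feeding in $\rho(Y_2)=\rho(Y_3)=1$ kills the $j=2$ and $j=3$ contributions, leaving
\[
\rho(S)=1+(\rho(Y_1)-1)=\rho(Y_1).
\]

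\textbf{Main obstacle.} The delicate point is Step~1: correctly identifying each nontrivial $G$-eigenspace of $H^2(S,\Q)$ with the anti-invariant part of the corresponding intermediate quotient, and checking this is an isomorphism of \emph{Hodge} structures so that it commutes with taking $(1,1)$-classes. One must verify that $p_j^\ast$ maps $H^2(Y_j,\Q)$ injectively onto the $\chi_j$-eigenspace and that the invariant part $H^2(Y_j,\Q)^{+}$ is precisely $\pi_j^\ast H^2(\P^2,\Q)$ (so that only the hyperplane class is invariant, using $\rho$-count and $q(Y_j)=0$). Since the statement says the proof follows \cite[Corollary~3.17]{Gar22} closely, I would lean on that reference for the eigenspace–quotient dictionary and for the vanishing of irregularities, and spend the effort making the bookkeeping of invariant versus anti-invariant hyperplane classes precise.
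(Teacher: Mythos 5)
Your proposal is correct and follows essentially the same route as the paper's proof: the $G$-eigenspace decomposition of $H^2(S,\Q)$, the identification of $H^2(Y_j,\Q)$ (and of $\NS(Y_j)\otimes\Q$) with the $\sigma_j$-invariant part via pullback, and a rank count using the fact that the trivial eigenspace is spanned by the hyperplane class. The only cosmetic difference is that you phrase the count through $(1,1)$-classes and the intermediate formula $\rho(S)=1+\sum_{j}(\rho(Y_j)-1)$, whereas the paper works directly with the eigenspace decomposition of $\NS(S)\otimes\Q$ as a $G$-stable Hodge substructure.
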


\begin{proof}
Let $\sigma_1$ and $\sigma_2$ be generators of $G$ and let $\sigma_3=\sigma_1\sigma_2$. Let $Y_i$ denote the quotient surface $S/\langle \sigma_i\rangle$.
We can decompose $H^2(S,\Q)$ into four eigenspaces corresponding to the characters of $G$:
\[
H^2(S,\mathbb{Q}) = H^2(S,\Q)^{++} \oplus H^2(S,\Q)^{+-} \oplus H^2(S,\Q)^{-+} \oplus H^2(S,\Q)^{--}.
\]
Here, $H^2(S,\Q)^{\varepsilon_1 \varepsilon_2}$ is the subspace where $\sigma_i$ acts by $\varepsilon_i \in \{\pm1\}$. Notice that $H^2(\P^2,\Q)\cong H^2(S,\Q)^{++}$ and due to the fact that 
$H^2(S/\langle \sigma_i\rangle,\Q)\cong H^2(S,\Q)^{\langle \sigma_i\rangle}$, we have the following isomorphisms:
\begin{align}
H^2(Y_1,\mathbb{Q}) &\cong H^2(S,\Q)^{++} \oplus H^2(S,\Q)^{+-},\nonumber\\
H^2(Y_2,\mathbb{Q}) &\cong H^2(S,\Q)^{++} \oplus H^2(S,\Q)^{-+},\\
H^2(Y_3,\mathbb{Q}) &\cong H^2(S,\Q)^{++} \oplus H^2(S,\Q)^{--}. \nonumber  
\end{align}
Then, we have
\[
H^2(S,\mathbb{Q}) = H^2(Y_1,\Q) \oplus H^2(S,\Q)^{-+} \oplus H^2(S,\Q)^{--}.
\]
Now, we pass to algebraic classes. Since $S$ is smooth projective, the Néron–Severi
group $\NS(S)$ is a Hodge substructure of $H^2(S,\Q)$ stable under $G$.
Thus, it decomposes accordingly:
\[
\NS(S)\otimes\Q = (\NS(S)^{++}\oplus \NS(S)^{+-}\oplus \NS(S)^{-+}\oplus \NS(S)^{--})\otimes \Q.
\]
For a finite double cover $p_i:S\to Y_i$, the pullback induces an injection
$p_i^*:\NS(Y_i)\otimes\Q\hookrightarrow \NS(S)\otimes\Q$ with image exactly the
$\sigma_i$-invariant part $\NS(S)^{\sigma_i=+1}$; conversely, any divisor class on
$S$ fixed by $\sigma_i$ descends to $Y_i$. Therefore
\begin{align}\label{NS Yi}
\NS(Y_1)\otimes\Q &\cong (\NS(S)^{++}\oplus \NS(S)^{+-})\otimes \Q,\nonumber\\
\NS(Y_2)\otimes\Q &\cong (\NS(S)^{++}\oplus \NS(S)^{-+})\otimes \Q,\\
\NS(Y_3)\otimes\Q &\cong (\NS(S)^{++}\oplus \NS(S)^{--})\otimes \Q.\nonumber    
\end{align}

Since $\NS(\P^2)=\Z \cdot H$,
$\NS(S)^{++}\cong \pi^*\NS(\P^2)$ has rank $1$.
By hypothesis $\rho(Y_2)=1$ and $\rho(Y_3)=1$. Comparing ranks in $\eqref{NS Yi}$ gives
\[
\rho(Y_2)=\rk\NS(S)^{++}+\rk\NS(S)^{-+}=1,
\]
which implies $\rk\NS(S)^{-+}=0$ and analogously one can prove that $\rk\NS(S)^{--}=0$.
Hence
\[
\NS(S)\otimes\Q  =  (\NS(S)^{++}\oplus \NS(S)^{+-})\otimes \Q.
\]
Using $\eqref{NS Yi}$ we obtain
\[
\NS(Y_1)\otimes\Q  \cong  (\NS(S)^{++}\oplus \NS(S)^{+-})\otimes \Q
 \cong  \NS(S)\otimes\Q,
\]
so $\rho(Y_1)=\rho(S)$, which gives the desired result.
\end{proof}
\begin{remark}\label{equal rho}
If $S$ and $Y_1$ are smooth and $\rho(Y_1)=\rho(S)$, then for any line bundle 
$\mathcal{L}$ on $S$ we have 
$\pi^*(\Norm(\mathcal{L})) \equiv 2\mathcal{L}$. In particular, every element of $\NS(S)/\pi^{*}\NS(Y_1)$ has order $\le2$.
The line bundle $\Norm(\mathcal{L})$ on $Y_1$, called the \emph{norm} of $\mathcal{L}$, 
is defined as follows: for each point $y \in Y_1$, the fiber 
$\Norm(\mathcal{L})_y$ is the tensor product 
$\bigotimes_{x \in \pi^{-1}(y)} \mathcal{L}_x^{\otimes n_{\pi,x}}$,
where $n_{\pi,x}$ denotes the multiplicity of $\pi$ at the point $x$.
\end{remark}
The corollary below follows directly from the proof of Lemma \ref{NS}. 
\begin{corollary}\label{Cor Picard}
Let $\pi: S\to \P^2$ be a smooth bidouble plane. Then $\rho(S)=1$ if and only if
$$\rho(Y_1)=\rho(Y_2)=\rho(Y_3)=1.$$
\end{corollary}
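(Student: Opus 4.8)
The plan is to read the result directly off the eigenspace bookkeeping already carried out in the proof of Lemma~\ref{NS}. Writing $a_{\varepsilon_1\varepsilon_2}=\rk\NS(S)^{\varepsilon_1\varepsilon_2}$ for $\varepsilon_1,\varepsilon_2\in\{\pm\}$, the $G$-equivariant decomposition of $\NS(S)\otimes\Q$ gives
\[
\rho(S)=a_{++}+a_{+-}+a_{-+}+a_{--},
\]
while the identifications in \eqref{NS Yi} read
\[
\rho(Y_1)=a_{++}+a_{+-},\qquad \rho(Y_2)=a_{++}+a_{-+},\qquad \rho(Y_3)=a_{++}+a_{--}.
\]
As noted in the proof of Lemma~\ref{NS}, the invariant part $\NS(S)^{++}\cong\pi^*\NS(\P^2)$ has rank one, so $a_{++}=1$.

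From here I would combine the four displayed equalities into the single additivity relation
\[
\rho(S)=\rho(Y_1)+\rho(Y_2)+\rho(Y_3)-2a_{++}=\rho(Y_1)+\rho(Y_2)+\rho(Y_3)-2.
\]
The only remaining input is the elementary fact that each $Y_i$ is a smooth projective surface, hence $\rho(Y_i)\ge 1$; equivalently, the three ``non-trivial'' eigenspace ranks satisfy $a_{+-},a_{-+},a_{--}\ge 0$.

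With this in hand both implications are immediate. If $\rho(S)=1$, then $a_{+-}+a_{-+}+a_{--}=0$, and by nonnegativity each summand vanishes, so $\rho(Y_1)=\rho(Y_2)=\rho(Y_3)=1$. Conversely, if all three intermediate quotients have Picard number one, the additivity formula yields $\rho(S)=1+1+1-2=1$.

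I do not expect any genuine obstacle here, since all the structural work was done in Lemma~\ref{NS}: the corollary simply records the symmetric version of the vanishing argument, where the hypothesis $\rho(Y_2)=\rho(Y_3)=1$ used there is replaced by the symmetric hypothesis on all three quotients. The one point worth stating explicitly is the nonnegativity of $a_{+-},a_{-+},a_{--}$ together with $a_{++}=1$, which is exactly what turns the additivity relation into the stated equivalence.
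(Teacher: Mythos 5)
Your proposal is correct and is essentially the paper's own argument: the paper states that the corollary ``follows directly from the proof of Lemma~\ref{NS},'' and your eigenspace bookkeeping (the identifications \eqref{NS Yi}, $a_{++}=1$, and nonnegativity of the remaining ranks) is precisely that reading, made explicit via the additivity relation $\rho(S)=\rho(Y_1)+\rho(Y_2)+\rho(Y_3)-2$. Nothing further is needed.
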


The following lemma and proposition provide sufficient conditions for the intermediate quotients to have minimal Picard number.

\begin{lemma}\label{NS n1+n2>6}
Let $\pi\colon Y\to \P^2$ be the double cover branched along $D = D_1 + D_2$, where $D_1$ and $D_2$ are general smooth plane curves of degrees $n_1$ and $n_2$, respectively, meeting transversely, and suppose that $n_1 + n_2 \ge 6$. Let $\nu: \widetilde Y \to Y$ be the minimal resolution. Then
$\rho(\widetilde Y) = 1 + n_1 n_2$,
and hence
$\rho(Y) = 1$.

\end{lemma}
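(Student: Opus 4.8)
The plan is to replace $Y$ by its minimal resolution, realize that resolution as a \emph{smooth} double cover of a blow-up of $\P^2$, decompose its second cohomology under the deck involution, and show that for general $D_1,D_2$ only the invariant part is algebraic. Since $D_1,D_2$ are general of degrees $n_1,n_2$, by Bézout they meet transversally in $n_1n_2$ points, which are exactly the nodes of $D=D_1+D_2$. Let $b\colon \widehat{\P}^2\to\P^2$ be the blow-up at these points, with exceptional divisors $E_1,\dots,E_{n_1n_2}$, and set $L=\tfrac{n_1+n_2}{2}H$ and $\widetilde L = b^*L-\sum_i E_i$. The strict transforms $\widetilde D_1,\widetilde D_2$ are smooth and disjoint, and $\widetilde D_1+\widetilde D_2=2\widetilde L$ is $2$-divisible, so there is a smooth double cover $\hat\pi\colon \widetilde Y\to\widehat{\P}^2$ branched along $\widetilde D_1\sqcup\widetilde D_2$ with $b\circ\hat\pi=\pi\circ\nu$. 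Each $E_i$ meets the branch divisor in two points, so $\hat\pi^{-1}(E_i)$ is a smooth rational curve with $(\hat\pi^{-1}(E_i))^2=2E_i^2=-2$; these $(-2)$-curves are contracted by $\nu$, identifying the singularities of $Y$ as $n_1n_2$ ordinary double points and $\widetilde Y$ as their minimal resolution.

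Let $\iota$ be the deck involution. Averaging by $\iota$ gives $H^2(\widetilde Y,\Q)=H^2(\widetilde Y,\Q)^+\oplus H^2(\widetilde Y,\Q)^-$ with $\hat\pi^*\colon H^2(\widehat{\P}^2,\Q)\xrightarrow{\ \sim\ }H^2(\widetilde Y,\Q)^+$. As $\widehat{\P}^2$ is rational, $H^2(\widehat{\P}^2,\Q)$ is algebraic of rank $1+n_1n_2$ (spanned by $H$ and the $E_i$), so $\NS(\widetilde Y)^+\otimes\Q=H^2(\widetilde Y,\Q)^+$ and $\rho(\widetilde Y)=1+n_1n_2+\operatorname{rk}\NS(\widetilde Y)^-$. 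It therefore suffices to show $\NS(\widetilde Y)^-\otimes\Q=0$ for general $D_1,D_2$. All of $H^{2,0}(\widetilde Y)$ is anti-invariant, and by the projection formula $H^{2,0}(\widetilde Y)=H^0(\widehat{\P}^2,K_{\widehat{\P}^2}+\widetilde L)=H^0(\P^2,\O(\tfrac{n_1+n_2}{2}-3))$, which is nonzero precisely because $n_1+n_2\ge 6$. Hence the anti-invariant sub-variation of Hodge structure is a nontrivial weight-two structure with $h^{2,0}\neq 0$.

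I would then run a Noether--Lefschetz argument over the parameter space $U$ of smooth transverse pairs $(D_1,D_2)$. By Cattani--Deligne--Kaplan the locus where $\NS(\widetilde Y)^-$ jumps is a countable union of proper closed algebraic subvarieties of $U$, provided no nonzero flat (monodromy-invariant) class in the anti-invariant lattice is of type $(1,1)$. To certify this I would use the weighted-projective model $Y=\{w^2=f_1f_2\}\subset\P(1,1,1,\tfrac{n_1+n_2}{2})$ and its infinitesimal variation of Hodge structure: the differential of the period map is multiplication in the Jacobian ring, and one checks that this cup-product map stays surjective onto the primitive $(1,1)$ part after restricting the deformations to the reducible directions $\dot f_1 f_2+f_1\dot f_2$. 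Equivalently, degenerating within $U$ produces nodal vanishing cycles, and Picard--Lefschetz shows the monodromy acts on the transcendental anti-invariant lattice without invariants. Either route yields $\NS(\widetilde Y)^-\otimes\Q=0$ for general $(D_1,D_2)$, so $\rho(\widetilde Y)=1+n_1n_2$.

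Finally, $\nu$ contracts the $n_1n_2$ disjoint $(-2)$-curves $\hat\pi^{-1}(E_i)$, whose classes are linearly independent (their intersection form is $-2$ times the identity), and $\Pic(\widetilde Y)\otimes\Q=\nu^*(\Pic(Y)\otimes\Q)\oplus\bigoplus_i\Q[\hat\pi^{-1}(E_i)]$; hence $\rho(Y)=\rho(\widetilde Y)-n_1n_2=1$. The hard part is the Noether--Lefschetz step: because the branch locus is forced to split as $D_1+D_2$, the family $U$ is a proper subfamily of all degree-$(n_1+n_2)$ branch curves, so the standard Jacobian-ring surjectivity must be re-established after restricting the deformation directions to those preserving the factorization $f=f_1f_2$. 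The hypothesis $n_1+n_2\ge 6$ enters exactly here, ensuring $H^{2,0}(\widetilde Y)\neq 0$ so that the Noether--Lefschetz locus is genuinely proper.
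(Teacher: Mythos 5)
Your reduction of the lemma is correct and in places cleaner than the paper's: realizing $\widetilde Y$ as the smooth double cover of the blow-up $\widehat{\P}^2$ branched along the disjoint strict transforms, identifying the $n_1n_2$ singularities of $Y$ as $A_1$ points with $(-2)$-curves $\hat\pi^{-1}(E_i)$, splitting $H^2(\widetilde Y,\Q)$ under the deck involution so that the invariant part accounts for exactly $1+n_1n_2$ algebraic classes, and the final descent $\rho(Y)=\rho(\widetilde Y)-n_1n_2$ are all fine. The computation $H^{2,0}(\widetilde Y)\cong H^0(\P^2,\O(\tfrac{n_1+n_2}{2}-3))$ is also correct. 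Everything therefore hinges on showing $\NS(\widetilde Y)^-\otimes\Q=0$ for general $(D_1,D_2)$.

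That step is where the gap is: it carries the entire content of the lemma, and you do not prove it --- you offer two routes and for each one write, in effect, ``one checks that this works.'' Both require genuine work precisely because, as you yourself note, the reducible curves $f_1f_2$ form a proper subfamily of $|\O_{\P^2}(n_1+n_2)|$, so neither Green's infinitesimal Noether--Lefschetz surjectivity nor the standard monodromy computation applies off the shelf. For the IVHS route you would have to show that multiplication by the restricted tangent directions $\dot f_1f_2+f_1\dot f_2$ (a proper subspace of the degree-$(n_1+n_2)$ piece of the Jacobian ring) still surjects where needed, and it is not clear that this holds or how you would establish it. For the Picard--Lefschetz route you would need to say which degenerations inside $U$ you use and why the resulting vanishing cycles generate enough monodromy to leave no invariants in the anti-invariant transcendental lattice. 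Note also that $h^{2,0}\neq 0$ is necessary but not sufficient for the jumping locus to be proper: Cattani--Deligne--Kaplan only gives a countable union of closed algebraic subsets, and properness of that union is exactly the unproved monodromy/IVHS statement. The paper sidesteps all of this by degenerating in the opposite direction: it realizes $w^2=f_1f_2$ as the special fiber of a family whose general fiber $w^2=f_t$ is a smooth double plane, imports Buium's theorem ($\rho=1$ for a general smooth branch curve of degree $\geq 6$), and transfers the conclusion back to $\widetilde Y$ via semistable reduction, the Clemens--Schmid sequence, and the local invariant cycle theorem. If you want to keep your (otherwise attractive) framework, replacing your Noether--Lefschetz step by such a degeneration-to-the-smooth-case argument is the most direct way to close the gap.
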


\begin{proof}
Let $H$ be the class of a line in $\mathbb P^2$, and write $2d = n_1 + n_2 \ge 6$.
Let $f_i\in H^0(\mathbb P^2,\mathcal O_{\mathbb P^2}(n_i))$ be homogeneous equations defining $D_i$, and set
\[
Y \;=\; \{\, w^2 = f_1(x,y,z)f_2(x,y,z) \,\} \subset \mathbb P(1,1,1,d).
\]
For a general choice of $(f_1,f_2)$, the curves $D_1$ and $D_2$ meet transversely in $n_1n_2$ points and have no other singularities. Consequently, $Y$ has exactly $n_1 n_2$ rational double points (of type $A_1$), one over each point $p\in D_1\cap D_2$. Denote by $E_p$ the exceptional $(-2)$--curve above $p$ in the minimal resolution $\nu:\widetilde Y\to Y$.

The classes $(\pi\circ\nu)^*H$ and the $E_p$ are algebraic on $\widetilde{Y}$. Let $L$ be the lattice generated by $(\pi\circ\nu)^*H$ and the $E_p$ for every $p\in D_1\cap D_2$.
These classes are linearly independent, thus $\rho(\widetilde Y)\ge 1+n_1 n_2$.
To compute the Picard numbers of $\widetilde Y$ and $Y$, consider a one-parameter family $f_t\in H^0(\mathbb P^2,\mathcal O_{\mathbb P^2}(2d))$ with $f_0=f_1f_2$ and such that for $t\neq0$, the surface $Y_t=\{w^2=f_t\}$ is smooth. Define
\[
\mathcal Y = \{\, w^2 = f_t(x,y,z) \,\}
\;\subset\; \mathbb P(1,1,1,d)\times \Delta,
\]
where $\Delta$ is a small analytic disk around $0\in\mathbb C$. 
Let $\pi_\Delta:\mathcal Y\to\Delta$ be the projection to the parameter. 
Then $\mathcal Y\to\Delta$ is flat, with central fiber $Y_0=Y$ and smooth fibers $\pi_t\colon Y_t\to \P^2$ for $t\ne0$.

By the semistable reduction theorem (see \cite[p. 102]{Morrison84}), 
there exists a finite base change $\Delta'\to\Delta$ (ramified only over $0$) and a birational modification of the total space 
such that the induced family $\mathcal Y'\to\Delta'$ is semistable, that is, its central fiber is a normal crossings divisor with smooth components.
Replacing $\mathcal Y$ by this semistable model ensures that the 
Clemens–Schmid sequence and the specialization morphism of mixed Hodge structures apply functorially. 
In the semistable model, besides the main component $\widetilde{Y}$, the central fiber contains, for each node 
$p \in D_1\cap D_2$, an additional
smooth rational surface $F_p$, obtained as the minimal resolution of the corresponding local double point degeneration.  Moreover, each $F_p$ meets $\widetilde Y$ transversely along the
exceptional curve $E_p$.

By Buium \cite[Theorem, p.~1]{Bui83}, a general double plane branched along a smooth divisor of degree at least six has Picard number $1$. Hence, the orthogonal complement (with respect to the cup product) of $\pi_t^*H$ in $H^2(Y_t,\mathbb{Q})$, denoted $P_t$, satisfies $P_t \cap H^{1,1}(Y_t) = 0$ for $t$ sufficiently close to $0$.

Now let $L_\mathbb Q=L\otimes\mathbb Q$ and let $P$ denote the orthogonal complement of $L_\mathbb Q$ in $H^2(\widetilde Y,\mathbb Q)$. We claim that $P\cap H^{1,1}(\widetilde Y)=0$. For the sake of contradiction, suppose that there exists a nonzero class $\alpha\in P\cap H^{1,1}(\widetilde Y)$. Let
$\mathrm{sp}: H^2(\widetilde Y,\mathbb Q) \longrightarrow H^2_{\lim}(Y_t,\mathbb Q)
$ be the specialization morphism of mixed Hodge structures associated to the semistable family. By the Local Invariant Cycle Theorem (see \cite[p.~108]{Morrison84} and \cite[Th\'eor\`eme 4.1.1]{Deligne71}), the image of $\mathrm{sp}$ is the monodromy--invariant subspace. Hence $\mathrm{sp}(\alpha)$ is a monodromy--invariant class in $H^{2}(Y_t,\Q)\cap H^{1,1}(Y_t)$ for $t$ sufficiently close to $0$.

The kernel of $\mathrm{sp}$ is generated by the vanishing cycles, which on $t=0$ correspond to the classes of the exceptional curves $E_p$ by the Clemens-Schmid exact sequence (see \cite[p.~108]{Morrison84}). Since $\alpha\in P$ is orthogonal to each $E_p$, we have $\mathrm{sp}(\alpha)\neq0$. 

The hyperplane class $\pi_t^*H$ is the restriction of $\O_{\P(1,1,1,d)}(1)$ to $Y_t$. Therefore,
$$
\langle \mathrm{sp}(\alpha), \pi_t^*H \rangle
= \langle \alpha,(\pi\circ\nu)^*H\rangle = 0.
$$
Thus $\mathrm{sp}(\alpha)\in P_t\cap H^{1,1}(Y_t)$, which is a contradiction as $\mathrm{sp}(\alpha)\neq0$. This contradiction proves that $P\cap H^{1,1}(\widetilde Y)=0$ as claimed. Therefore $\mathrm{NS}(\widetilde Y)\otimes\mathbb Q = L_\mathbb Q$, which implies that $\rho(\widetilde Y)=1+n_1 n_2$.
Finally, since $Y$ has only rational double points, the natural map $\mathrm{Pic}(\widetilde Y)\to\mathrm{Pic}(Y)$ is surjective with kernel generated by the exceptional curves $\{E_p\}$. Consequently
$\rho(Y)=\rho(\widetilde Y)-n_1 n_2 = 1$,
which yields the desired result.
\end{proof}

\begin{proposition}
\label{Picard intermediate}
Let $Y$ be a double cover of $\P^2$ branched along $D=D_1+D_2$ where $D_1$ and $D_2$ are general effective divisors of degrees $n_1$ and $n_2$, respectively, and assume that $n_1\le n_2$. Then $\rho(Y)>1$ if and only if $(n_1,n_2)\in\{(0,2),(0,4),(1,3),(2,2)\}$.
\end{proposition}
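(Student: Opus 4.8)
The plan is to separate the analysis according to the value of $n_1+n_2$, which is forced to be even since a double cover branched along $D_1+D_2$ exists only if $D_1+D_2\equiv 2L$. Writing $2d=n_1+n_2$, I realize $Y=\{w^2=f_1f_2\}\subset \mathbb{P}(1,1,1,d)$ exactly as in Lemma \ref{NS n1+n2>6}. If $n_1=n_2=0$ the cover is \'etale over the simply connected $\mathbb{P}^2$, hence disconnected, so this degenerate pair is excluded. If $n_1+n_2\ge 6$, Lemma \ref{NS n1+n2>6} already gives $\rho(Y)=1$, so no such pair can occur in the list. It therefore remains to treat $n_1+n_2\in\{2,4\}$, that is $d\in\{1,2\}$, i.e. the pairs $(0,2),(1,1)$ and $(0,4),(1,3),(2,2)$.

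For $d\le 2$ I would exploit that $Y$ is a del Pezzo surface with at worst rational double points. For general $D_1,D_2$, transversality and B\'ezout show that $Y$ is irreducible and normal with exactly $k=n_1n_2$ ordinary nodes, one over each point of $D_1\cap D_2$, and no other singularities. The double cover canonical bundle formula gives $K_Y=\pi^*\big(\mathcal{O}_{\mathbb{P}^2}(d-3)\big)$, so $-K_Y=\pi^*\mathcal{O}_{\mathbb{P}^2}(3-d)$ is ample because $d\le 2$, and $K_Y^2=(d-3)^2(\pi^*H)^2=2(d-3)^2$. Since each node is a rational double point, the minimal resolution $\nu\colon\widetilde Y\to Y$ is crepant, whence $-K_{\widetilde Y}=\nu^*(-K_Y)$ is nef and big and $K_{\widetilde Y}^2=2(d-3)^2$; thus $\widetilde Y$ is a weak del Pezzo surface.

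To compute $\rho(\widetilde Y)$ I would note that $\widetilde Y$ is rational with $q(\widetilde Y)=p_g(\widetilde Y)=0$: the irregularity vanishes since $h^1(\mathcal{O}_Y)=h^1(\mathcal{O}_{\mathbb{P}^2})+h^1(\mathcal{O}_{\mathbb{P}^2}(-d))=0$, and the geometric genus vanishes since $h^0(K_{\widetilde Y})=h^0(K_Y)=0$ as $d<3$. Hence $H^2(\widetilde Y,\mathbb{Q})$ is purely algebraic and $\rho(\widetilde Y)=b_2(\widetilde Y)$. Because $\chi(\mathcal{O}_{\widetilde Y})=\chi(\mathcal{O}_Y)=1$ (a direct computation from $\pi_*\mathcal{O}_Y=\mathcal{O}_{\mathbb{P}^2}\oplus\mathcal{O}_{\mathbb{P}^2}(-d)$), Noether's formula gives the topological Euler characteristic $e(\widetilde Y)=12-K_{\widetilde Y}^2$ and therefore $\rho(\widetilde Y)=b_2(\widetilde Y)=10-2(d-3)^2$. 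Finally, as $Y$ has only rational double points, the descent already used in Lemma \ref{NS n1+n2>6} yields $\rho(Y)=\rho(\widetilde Y)-k=10-2(d-3)^2-n_1n_2$. Substituting $d=1$ gives $\rho(Y)=2-n_1n_2$, so $\rho=2$ for $(0,2)$ and $\rho=1$ for $(1,1)$; substituting $d=2$ gives $\rho(Y)=8-n_1n_2$, so $\rho=8,5,4$ for $(0,4),(1,3),(2,2)$ respectively. Together with the case $n_1+n_2\ge 6$, this shows $\rho(Y)>1$ precisely for $(0,2),(0,4),(1,3),(2,2)$, which is the claim.

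I expect the crux to be the structural input for general $D_1,D_2$: verifying that $Y$ acquires exactly $n_1n_2$ nodes and no worse singularities, that the canonical bundle formula and crepancy hold on the singular model, and that the exceptional $(-2)$--curves are independent and span the full kernel of $\mathrm{Pic}(\widetilde Y)\to\mathrm{Pic}(Y)$, so that the descent of Picard numbers is exact. Once these facts are secured, the computation of $b_2$ via Noether's formula and the case-by-case substitution are routine.
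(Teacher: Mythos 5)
Your proposal is correct and follows essentially the same route as the paper: reduce to $n_1+n_2\le 4$ via Lemma \ref{NS n1+n2>6}, compute $\rho(\widetilde Y)=b_2(\widetilde Y)$ for the rational resolution, and descend by $\rho(Y)=\rho(\widetilde Y)-n_1n_2$. The only cosmetic difference is that you treat $n_1+n_2=2$ by the same Noether-formula computation, whereas the paper simply identifies $Y$ as $\P^1\times\P^1$ (for $(0,2)$) or the quadric cone (for $(1,1)$); both give the same answer.
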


\begin{proof}
By Lemma \ref{NS n1+n2>6}, we have that $\rho(Y)=1$ whenever $n_1+n_2\ge 6$.

Assume $n_1+n_2=4$. The resolution $\widetilde{Y}$ of $Y$ is a rational surface with Picard number
$\rho(\widetilde{Y})=\chi(\O_{\widetilde{Y}})-2=8$.
Since $\widetilde{Y}$ is obtained by blowing up $Y$ at $n_1n_2$ points, we have
$\rho(\widetilde{Y})=\rho(Y)+n_1n_2$.
Hence $\rho(Y)=8$ for $(n_1,n_2)=(0,4)$, $\rho(Y)=5$ for $(n_1,n_2)=(1,3)$, and $\rho(Y)=4$ for $(n_1,n_2)=(2,2)$.

Finally, suppose that $n_1+n_2=2$. When $(n_1,n_2)=(0,2)$, $Y$ is isomorphic to $\P^1\times\P^1$, then $\rho(Y)=2$. When $(n_1,n_2)=(1,1)$, $Y$ is a quadratic cone in $\P^3$, then $\rho(Y)=1$.
\end{proof}

\begin{proof}[Proof of Theorem~\ref{Main Theorem}] By Corollary~\ref{Cor Picard}, $S$ has Picard number bigger than $1$ if and only if some intermediate quotient has Picard number bigger than $1$. Proposition~\ref{Picard intermediate} implies that some intermediate quotient has Picard number bigger than $1$ if and only if some permutation of $(n_1,n_2,n_3)$ belongs to
\[
\{(0,2,2n)\colon n\ge 1\}\cup\{(0,4,2n)\colon n\ge 2\}\cup\{(1,3,2n+1)\colon n\ge 0\}\cup \{(2,2,2n)\colon n\ge 1\}.
\]
This yields the desired result.
\end{proof}

%%%%%%%%%%%%%%%%%%%%%%%%%%%%%%
%%%%%%%%%%%%%%%%%%%%%%%%%%%%%%

\section{Ulrich line bundles on general bidouble Planes}\label{section4}

In this section, we determine the range of branch degrees for which Ulrich line bundles could exist.  
The strategy is to analyze the existence of Ulrich line bundles on smooth bidouble planes by distinguishing two complementary cases, as dictated by Theorem~\ref{Main Theorem}.  
First, we establish non-existence results for covers branched in sufficiently high degree, and then we turn to the low-degree cases where existence may occur. Throughout this section, we denote by $H$ the line bundle $\pi^*\mathcal{O}_{\mathbb{P}^2}(1)$.

\subsection{Non-existence of Ulrich line bundle in high degree} 
We begin by showing that, for bidouble covers of $\P^2$ branched along divisors of large degree, Ulrich line bundles cannot exist. 

\begin{lemma}\label{odd line bundles}
Let $\pi\colon S\to \P^2$ be a smooth odd bidouble plane. Then, $S$ admits no Ulrich vector bundles of odd rank with respect to $H$.
\end{lemma}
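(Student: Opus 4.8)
The plan is to derive a parity contradiction from the first Ulrich numerical equality in Proposition~\ref{iff Ulrich}. Suppose, for contradiction, that $\mathcal{E}$ is an Ulrich bundle of odd rank $r$. By \eqref{ulrich-equalities1} we have
\[
c_1(\mathcal{E})\cdot H = \tfrac{r}{2}(3H+K_S)\cdot H.
\]
First I would simplify the right-hand side using Proposition~\ref{invariants}(2): writing $K_S\equiv -3H+R$ with $2R\equiv (n_1+n_2+n_3)H$, one gets $3H+K_S\equiv R$, and hence
\[
(3H+K_S)\cdot H = R\cdot H = \tfrac12(n_1+n_2+n_3)\,H^2 = 2(n_1+n_2+n_3),
\]
since $H^2=\deg(\pi)=4$. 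Substituting yields the clean identity $c_1(\mathcal{E})\cdot H = r(n_1+n_2+n_3)$. As $S$ is an odd bidouble plane, all $n_i$ are odd, so $n_1+n_2+n_3$ is odd; since $r$ is also odd, the right-hand side is \emph{odd}.

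The crux is then to show that the left-hand side is forced to be \emph{even} for every divisor class on $S$, which will complete the contradiction. The key observation is that $H$ is divisible by $2$ in $\NS(S)$. To see this, let $R_1:=(\pi^{-1}(D_1))_{\mathrm{red}}$ be the reduced preimage of $D_1$. Since $\pi$ ramifies with index two along $D_1$, one has $\pi^*D_1 = 2R_1$, so that $2R_1\equiv n_1 H$ in $\NS(S)$. Because $n_1$ is odd, the class $H':=R_1-\tfrac{n_1-1}{2}H$ is an \emph{integral} divisor class and satisfies $2H'\equiv H$. Consequently, for any vector bundle $\mathcal{E}$,
\[
c_1(\mathcal{E})\cdot H = 2\bigl(c_1(\mathcal{E})\cdot H'\bigr)
\]
is even, contradicting the oddness established above. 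Hence no Ulrich bundle of odd rank exists.

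I expect the only delicate point to be the justification that $\pi^*D_1 = 2R_1$ with $R_1$ an honest integral class, i.e.\ that the reduced preimage of a branch curve is a genuine divisor along which the cover ramifies with index two. This follows from the local description of the bidouble cover near a general point of $D_1$ (in Catanese's normal form one of the fibre coordinates vanishes to order one while the branch equation $f_1$ becomes a square up to a unit), but I would state it carefully, since the entire argument rests on the resulting $2$-divisibility of $H$. Everything else reduces to the routine intersection computation above using $H^2=4$ and Proposition~\ref{invariants}, together with the elementary fact that a sum of three odd integers is odd.
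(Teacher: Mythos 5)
Your argument is correct and reaches the same parity contradiction as the paper, but via a different witness for the key divisibility. Both proofs start from the first Ulrich equality and the computation $(3H+K_S)\cdot H=2(n_1+n_2+n_3)$, yielding $c_1(\mathcal E)\cdot H=r(n_1+n_2+n_3)$, which is odd. The paper then multiplies this identity by $n-6$ (where $n=n_1+n_2+n_3$) and uses $2K_S=(n-6)H$ from Proposition~\ref{invariants}(2), so the left-hand side becomes $2\,c_1(\mathcal E)\cdot K_S$, manifestly even, against the odd integer $r\,n(n-6)$. You instead prove that $H$ itself is $2$-divisible in $\NS(S)$, via $\pi^*D_1=2R_1$ and the oddness of $n_1$, so that $c_1(\mathcal E)\cdot H$ is even outright. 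Your route requires the extra geometric input, which you correctly flag, that the reduced preimage $R_1$ of a branch curve is an honest integral divisor with $\pi^*D_1=2R_1$; this holds because the inertia group over a general point of $D_1$ is the order-two subgroup $\langle\sigma_1\rangle$, so the fibre there consists of two points each with ramification index two (in Catanese's normal form, $f_1$ pulls back to a unit times the square of a local equation for $R_1$). The paper's version sidesteps this by relying only on the already-recorded relation $2K_S=(n-6)H$, so it is slightly more self-contained; your version is marginally stronger in that it exhibits an actual square root of $H$ in $\Pic(S)$ (with $(H')^2=1$), a fact that could be reused. Both arguments are complete.
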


\begin{proof}
Let $n_1$, $n_2$, and $n_3$ be the ramification degrees of $\pi$, and let 
\[
n=n_1+n_2+n_3.
\]
Since each $n_i$ is odd, the sum $n$ is also odd. By Proposition \ref{invariants}, we have $2K_S = (n-6)H$. For the sake of contradiction, assume that an odd rank Ulrich bundle $\mathcal{L}$ on $S$ exists with respect to $H$. By Proposition~\ref{iff Ulrich}, in particular, we have
\begin{equation*}
c_1(\mathcal{L})\cdot H \;=\; \frac{\operatorname{rk}(\L)}{2}(3H+K_S)\cdot H.
\end{equation*}
Using $H^2=4$ and $K_S\cdot H=2(n-6)$, and multiplying both sides by $n-6$, we obtain 
\begin{align*}
2c_1(\mathcal{L})\cdot K_S &= \frac{\operatorname{rk}(\L)(n-6)}{2}(3H^2+K_S\cdot H)\\
&=n(n-6)\operatorname{rk}(\L).
\end{align*}
Since the left-hand side is an integer, the right-hand side must also be an integer. But $n(n-6)\operatorname{rk}(\L)$ is odd, while $2\,c_1(\mathcal{L})\cdot K_S$ is even, which is a contradiction. Thus, no odd rank Ulrich bundle can exist. \end{proof}

\begin{lemma}\label{non-existence} 
Let $\pi\colon S\to \P^2$ be a smooth even bidouble plane that admits an Ulrich line bundle with respect to $H$. Then
the Picard number $\rho(S)$ satisfies $\rho(S) > 1$. 
\end{lemma}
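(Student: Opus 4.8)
The plan is to argue by contradiction. Suppose $S$ carries an Ulrich line bundle $\mathcal{L}$ with respect to $H$ and that, contrary to the claim, $\rho(S)=1$; I will then derive a numerical relation on the branch degrees that cannot hold. The point is that $H=\pi^*\mathcal{O}_{\mathbb{P}^2}(1)$ is ample with $H^2=4$ (since $\pi$ has degree $4$ and $\mathcal{O}_{\mathbb{P}^2}(1)^2=1$), so $\rho(S)=1$ forces $\NS(S)\otimes\mathbb{Q}=\mathbb{Q}\cdot H$. Consequently every divisor class on $S$ is numerically a rational multiple of $H$. Writing $n=n_1+n_2+n_3$ and $m=n/2$ (an integer, since $S$ is even), Proposition~\ref{invariants}(2) gives $K_S\cdot H=2(n-6)$, hence $K_S\equiv(m-3)H$, and we may write $\mathcal{L}\equiv\lambda H$ for some $\lambda\in\mathbb{Q}$.

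First I would determine $\lambda$ from the first Ulrich identity. Applying \eqref{ulrich-equalities1} of Proposition~\ref{iff Ulrich} with $\operatorname{rk}(\mathcal{L})=1$, together with $H^2=4$ and $K_S\cdot H=2(n-6)$, yields $\mathcal{L}\cdot H=\tfrac12(3H+K_S)\cdot H=n$. Since $\mathcal{L}\cdot H=4\lambda$, this gives $\lambda=m/2$, so that $\mathcal{L}\equiv\tfrac{m}{2}H$, $\mathcal{L}^2=m^2$, and $\mathcal{L}\cdot K_S=2m(m-3)$.

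Next I would substitute into the second Ulrich identity. As $\mathcal{L}$ has rank $1$ we have $c_2(\mathcal{L})=0$, so \eqref{ulrich-equalities2} becomes $\mathcal{L}^2-\mathcal{L}\cdot K_S=2\bigl(H^2-\chi(\mathcal{O}_S)\bigr)$, whose left side equals $6m-m^2$. Inserting the expression for $\chi(\mathcal{O}_S)$ from Proposition~\ref{invariants}(4), rewritten via $\sum_i n_i^2=n^2-2s_2$ with $s_2=n_1n_2+n_1n_3+n_2n_3$ and $n=2m$, reduces the identity after simplification to $s_2=2m^2$, that is, $n_1n_2+n_1n_3+n_2n_3=\tfrac12(n_1+n_2+n_3)^2$.

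Finally I would rule this out. Since $(n_1+n_2+n_3)^2=\sum_i n_i^2+2s_2$, the relation $s_2=\tfrac12(n_1+n_2+n_3)^2$ is equivalent to $n_1^2+n_2^2+n_3^2=0$, forcing $n_1=n_2=n_3=0$; this contradicts the fact that at most one branch curve can be trivial (the remark following Proposition~\ref{invariants}). Hence $\rho(S)>1$. I expect the only delicate point to be the reduction ``$\rho(S)=1\Rightarrow\mathcal{L}\equiv\tfrac{m}{2}H$'': here one uses that the ample class $H$ spans $\NS(S)\otimes\mathbb{Q}$ and that every intersection number appearing in \eqref{ulrich-equalities1}--\eqref{ulrich-equalities2} depends only on the numerical class, so any torsion in $\mathcal{L}-\tfrac{m}{2}H$ is harmless. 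In particular the two Chern-class identities alone already force the contradiction, and the aCM part of the Ulrich condition is not needed.
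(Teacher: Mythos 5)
Your argument is correct and reaches the same final contradiction as the paper, namely that the two Chern-class identities of Proposition~\ref{iff Ulrich} force $n_1^2+n_2^2+n_3^2=0$, which is impossible since at most one branch divisor is trivial. The only real difference is the intermediate step: the paper first writes $\mathcal{L}$ (up to torsion) as $\tfrac{a}{m}H$ with $a,m$ coprime integers, observes $m\mid 4$, and then rules out $m=2$ and $m=4$ by parity arguments before substituting into the second identity, whereas you simply take $\mathcal{L}\equiv\lambda H$ with $\lambda\in\mathbb{Q}$, solve $\lambda=n/4$ from the first identity, and substitute directly. Your shortcut is legitimate: identities \eqref{ulrich-equalities1} and \eqref{ulrich-equalities2} depend only on the numerical class, and the computation yielding $\sum_i n_i^2=0$ goes through for any rational $\lambda$, so the paper's integrality case analysis is not actually needed for the contradiction. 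What the paper's longer route buys is the extra structural information that a putative Ulrich line bundle would have to be numerically $\mathcal{O}_S(a)$ with $a$ an integer, but for the statement of the lemma itself your streamlined version suffices.
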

\begin{proof}
Let $n_1$, $n_2$, and $n_3$ be the ramification degrees of $\pi$. Assume to the contrary that $S$ admits an Ulrich line bundle $\L$ and that $\rho(S)=1$. 
Then $\pi^{*}(\Pic(\P^2))$ is a finite-index subgroup of $\Pic(S)$, so there exist line bundles $\L'\in\Pic(S)$ and $\T\in \Pic(S)_{tor}$ integers $a$ and $m\ge1$, with $\gcd(a,m)=1$ such that $\L\cong \L'\otimes\T$ and $m\L'=\O_{S}(a)$. Notice that $\L$ and $\L'$ are numerically equivalent. Since 
$$
c_1(\L)\cdot H=\frac{4a}{m},
$$ 
and $\gcd(a,m)=1$, it follows that $m$ divides $4$. Thus, $m\in \{1,2,4\}$. We claim that $m=1$. 
    
From $K_S\cdot H=2(n_1+n_2+n_3)-12$ and Proposition~\ref{iff Ulrich}, we obtain
    \begin{equation}\label{H.L}
    \frac{4a}{m}=c_1(\mathcal{L})\cdot H \;=\; \tfrac{1}{2}(3H+K_S)\cdot H=n_1+n_2+n_3.
    \end{equation}
If $m= 4$, then $a=n_1+n_2+n_3$, an even number, contradicting $\gcd(a,m)=1$.

Now assume $m=2$. Proposition~\ref{iff Ulrich} gives
    \begin{equation}\label{2 condition Ulrich}
    0=\frac{1}{2}
    (c_1^2(\L)-c_1(\L)\cdot K_S)+\chi(\mathcal{O}_S)-H^2.
    \end{equation}
Hence $c_1^2(\L)-c_1(\L)\cdot K_S$ is even. 

Since 
\begin{equation}\label{L^2 and L.KS}
c_1^2(\L)=\frac{4a^2}{m^2}\qquad \text{and}\qquad  c_1(\L)\cdot K_S=\frac{2a}{m}(n_1+n_2+n_3-6),    
\end{equation}
we obtain
    \[
    c_1^2(\L)-c_1(\L)\cdot K_S=a^2-a(n_1+n_2+n_3-6).
    \]
    Because $n_1+n_2+n_3$ is even, the above expression is even only if $a$ is even, again contradicting $\gcd(a,m)=1$. Thus $m\ne 2$, and therefore $m=1$.

We conclude that any Ulrich line bundle $\L$ must be numerically equivalent to some divisor $\O_S(a)$ for some integer $a$. Moreover, \eqref{H.L} yields $4a=n_1+n_2+n_3$. Using \eqref{L^2 and L.KS} with 
$m=1$ and the expression for $\chi(\O_S)$ given in Proposition~\ref{invariants}, the Ulrich condition \eqref{2 condition Ulrich} becomes
\[
0=\frac{1}{2}\left(3(n_1+n_2+n_3)-\frac{(n_1+n_2+n_3)^2}{4}\right)+\frac{1}{4}\left((n^2_1+n^2_2+n^2_3)+n_1n_2+n_1n_3+n_2n_3-6(n_1+n_2+n_3)\right),
\]
hence $n_1^2+n_2^2+n_3^2=0$, which is a contradiction. Therefore $\rho(S)\neq 1$, hence $\rho(S)>1$.
\end{proof}

Theorem \ref{Main Theorem}, Lemma \ref{odd line bundles}, and Lemma \ref{non-existence} have the following consequence.
\begin{corollary}\label{Cor Ulrich}
Let $S\to \mathbb{P}^2$ be a smooth bidouble plane branched along general curves $D_1,D_2,D_3$ of degrees $n_1,n_2,n_3$. If no permutation of $(n_1,n_2,n_3)$ lies in
\[
\{(0,2,2n):n\geq 1\}\;\cup\;\{(0,4,2n):n\geq 2\}\;\cup\;\{(2,2,2n):n\geq 1\},
\]
then $S$ admits no Ulrich line bundles with respect to $H$.
\end{corollary}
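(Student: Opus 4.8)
The plan is to assemble Theorem~\ref{Main Theorem}, Lemma~\ref{odd line bundles}, and Lemma~\ref{non-existence} through a case split according to the parity of the branch degrees. By Definition~\ref{even} this split is exhaustive and well-defined: either all $n_i$ are even (the even case) or all are odd (the odd case), since the bidouble structure forces the $n_i$ to share a common parity.

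First I would dispose of the odd case. If $S$ is an odd bidouble plane, then Lemma~\ref{odd line bundles} shows that $S$ carries no Ulrich bundle of odd rank with respect to $H$. A line bundle has rank $1$, which is odd, so no Ulrich line bundle can exist, and this holds unconditionally, with no hypothesis on the degrees required. In particular, the odd family $\{(1,3,2n+1):n\ge 0\}$ appearing in Theorem~\ref{Main Theorem} need not figure in the excluded list of the corollary: it is absorbed by the parity obstruction.

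Next I would treat the even case by contraposition. Suppose $S$ is an even bidouble plane admitting an Ulrich line bundle with respect to $H$. By Lemma~\ref{non-existence} this forces $\rho(S)>1$, and then Theorem~\ref{Main Theorem} guarantees that some permutation of $(n_1,n_2,n_3)$ lies in
\[
\{(0,2,2n):n\ge1\}\cup\{(0,4,2n):n\ge2\}\cup\{(1,3,2n+1):n\ge0\}\cup\{(2,2,2n):n\ge1\}.
\]
Because all $n_i$ are even, the triple cannot meet the odd family $\{(1,3,2n+1):n\ge0\}$, so the permutation in fact lies in
\[
\{(0,2,2n):n\ge1\}\cup\{(0,4,2n):n\ge2\}\cup\{(2,2,2n):n\ge1\}.
\]
This is exactly the list excluded by hypothesis, yielding the contradiction that completes the even case.

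The argument is a bookkeeping assembly of the three earlier results, so I do not anticipate a genuine obstacle. The only point demanding care is the matching step: verifying that the even sublist of Theorem~\ref{Main Theorem} coincides with the corollary's excluded set, and recording explicitly that the odd family is subsumed by Lemma~\ref{odd line bundles} rather than by the Picard-number hypothesis. Keeping the even/odd dichotomy and the role of each cited result clearly separated is what makes the short deduction airtight.
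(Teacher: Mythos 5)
Your proposal is correct and follows exactly the deduction the paper intends (the paper gives no written proof, simply citing Theorem~\ref{Main Theorem}, Lemma~\ref{odd line bundles}, and Lemma~\ref{non-existence} as yielding the corollary): the odd case is killed by the parity-of-rank obstruction, and the even case by combining the Picard-number lemma with the classification of degree triples, with the odd family $\{(1,3,2n+1)\}$ correctly discarded. No gaps.
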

 
%\begin{remark}
%    By \cite[Theorem B, Corollary 2.3]{sonke7}, let $d\geq 2$ and $m\geq 3$ be integers, then a general simple cyclic $d$-cover $\pi: S\to\mathbb{P}^2$ with branch curve $B$ of degree $dm$ has Picard rank $1$. As a consequence, $S$ does not carry an Ulrich line bundle with respect to $H$
%    The case of general simple cyclic $d$-cover  $\pi: S\to\mathbb{P}^n$ for $n\geq 3$ and $d\leq n$ is presented in \cite[Theorem 1.7]{PN21}.
%\end{remark}

\subsection{Ulrich line bundles in low degree}
We now turn to the complementary situation where at least one pair of branch degrees is small.  
In this range, the Picard number can jump, and one might hope for the existence of Ulrich line bundles.  

For double covers of $\P^2$, it was shown in \cite{PN21} that the only obstruction is having Picard number $1$.  
In contrast, we will exhibit families of bidouble covers with Picard number greater than one that nonetheless admit no Ulrich line bundles.  
At the same time, we also identify particular low-degree configurations in which Ulrich line bundles exist.

\begin{proposition}\label{(0,2,2n)}
Let $\pi\colon S\to \P^2$ be a bidouble plane branched along the union of the zero divisor, a smooth conic, and a smooth curve of degree 
$2n$ for some $n\ge 3$, all in general position. Then $S$ admits no Ulrich line bundle with respect to $H$.

\end{proposition}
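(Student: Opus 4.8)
The plan is to determine $\NS(S)\otimes\mathbb{Q}$ exactly, together with its intersection form, and then to show that the two numerical Ulrich conditions of Proposition~\ref{iff Ulrich} cannot be solved over $\mathbb{Q}$.

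First I would examine the three intermediate double covers of diagram~\eqref{diagram} associated to $(n_1,n_2,n_3)=(0,2,2n)$. The quotient branched along $D_1+D_2$ is the double plane ramified over a smooth conic, hence isomorphic to $Q\coloneqq\mathbb{P}^1\times\mathbb{P}^1$, so $\rho(Q)=2$; write $p\colon S\to Q$ for the corresponding double cover and $\pi_Q\colon Q\to\mathbb{P}^2$. The remaining two quotients are branched along general divisors of degrees $2+2n$ and $0+2n$, both at least $6$ since $n\ge 3$ and both outside the exceptional list, so Proposition~\ref{Picard intermediate} gives them Picard number $1$. Applying Lemma~\ref{NS} with $Q$ in the role of $Y_1$ (equivalently, via Corollary~\ref{Cor Picard}) then yields $\rho(S)=\rho(Q)=2$ and, more precisely, $\NS(S)\otimes\mathbb{Q}=p^*\bigl(\NS(Q)\otimes\mathbb{Q}\bigr)$; in fact $p$ realizes $S$ as a double cover of $\mathbb{P}^1\times\mathbb{P}^1$ branched along a symmetric divisor, matching the remark in the introduction.

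Next I would fix the $\mathbb{Q}$-basis $F_1=p^*\mathcal{O}_Q(1,0)$, $F_2=p^*\mathcal{O}_Q(0,1)$ of $\NS(S)\otimes\mathbb{Q}$. Since $p$ has degree $2$ one gets $F_1^2=F_2^2=0$ and $F_1\cdot F_2=2$; because $\pi_Q^*\mathcal{O}_{\mathbb{P}^2}(1)=\mathcal{O}_Q(1,1)$, the polarization is $H=F_1+F_2$ with $H^2=4$. Proposition~\ref{invariants} then gives $K_S\equiv(n-2)H$ and $\chi(\mathcal{O}_S)=n^2-2n+2$.

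Finally, assume for contradiction that $\mathcal{L}$ is an Ulrich line bundle and write $c_1(\mathcal{L})\equiv\alpha F_1+\beta F_2$ with $\alpha,\beta\in\mathbb{Q}$. Condition~\eqref{ulrich-equalities1} reads $c_1(\mathcal{L})\cdot H=2(\alpha+\beta)=2(n+1)$, so $\alpha+\beta=n+1$, while \eqref{ulrich-equalities2} with $c_2(\mathcal{L})=0$ simplifies, after substituting the values of $H^2$, $K_S$ and $\chi(\mathcal{O}_S)$, to $2\alpha\beta=n$, that is $\alpha\beta=n/2$. Hence $\alpha,\beta$ are roots of $t^2-(n+1)t+\tfrac{n}{2}$, whose discriminant is $n^2+1$. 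As $n^2<n^2+1<(n+1)^2$ for $n\ge 1$, this is never a perfect square, so $\alpha,\beta\notin\mathbb{Q}$, contradicting $c_1(\mathcal{L})\in\NS(S)\otimes\mathbb{Q}$. The crux is the first step, namely the exact identification of $\NS(S)\otimes\mathbb{Q}$ with the pullback of $\NS(\mathbb{P}^1\times\mathbb{P}^1)$ and the computation of its intersection form; this is precisely where the hypothesis $n\ge 3$ enters, since for $n=2$ the quotient branched along $(0,4)$ acquires extra Picard number and Ulrich line bundles do in fact appear.
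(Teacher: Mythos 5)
Your proposal is correct and follows essentially the same route as the paper: it identifies $S$ as a double cover of $\P^1\times\P^1$, uses Lemma~\ref{NS} together with Proposition~\ref{Picard intermediate} to identify $\NS(S)\otimes\Q$ with the pullback of $\NS(\P^1\times\P^1)$, and arrives at the same quadratic with non-square discriminant $n^2+1$. The only cosmetic difference is that you work with rational coefficients $\alpha,\beta$ directly, whereas the paper clears denominators via the norm map (Remark~\ref{equal rho}) to reduce to integers $a,b$ and $m\in\{1,2\}$.
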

\begin{proof}
Notice that there is a morphism $\pi_1\colon S\to \P^1\times\P^1$, which is a double cover branched along the divisor $(2n,2n)$. For the sake of contradiction, suppose that there exists an Ulrich line bundle $\L$ with respect to $H$. By Proposition~\ref{iff Ulrich}, we have
    \begin{align}
c_1(\mathcal{L})\cdot H \;&=\; \tfrac{1}{2}(3H+K_S)\cdot H. \label{Ulrich conditions}\\
    0&=\frac{1}{2}
    (c_1^2(\L)-c_1(\L)\cdot K_S)+\chi(\mathcal{O}_S)-H^2. \label{Ulrich conditions2}        
    \end{align}
By Proposition \ref{invariants}, $\chi(\O_S)=n^2-2n+2$, $K_S\cdot H=4(n-2)$, and $H^2=4$.
    The Lemma \ref{NS} and Proposition \ref{Picard intermediate} imply $\rho(S)=\rho(\P^1\times\P^1)$, so Remark~\ref{equal rho} implies that there exists an integer $m\in \{1,2\}$ such that $m\L$ is the pullback of a class in $\NS(\P^1\times\P^1)$. Then, there exist integers $a$ and $b$ with 
    $$
    m\L=\pi_1^*(\O_{\P^1\times\P^1}(a,b)),
    $$ 
    and consequently, we have that 
    \[
    c_1(\L)\cdot H=\frac{2}{m}(a+b),\quad c_1(\L)\cdot K_S=\frac{2}{m}(n-2)(a+b), \quad \text{and }\quad c_1^2(\L)=\frac{4ab}{m^2}.
    \]
    Equation~\eqref{Ulrich conditions} yields $a+b=(n+1)m$, and \eqref{Ulrich conditions2} gives
    \begin{align*}
    0= \frac{1}{2}
    \left(\frac{4ab}{m^2}-\frac{2(n-2)(a+b)}{m}\right)+(n^2-2n+2)-4=\frac{2ab}{m^2}-n.
    %=\frac{2ab}{m^2}-(n-2)(n+1)+n^2-2n-2\\
    \end{align*}
    In particular, $2ab=nm^2$. Substituting $b=(n+1)m-a$, we have the following quadratic equation in $a$
    \[
    2a^2-2m(n+1)a+m^2n=0.
    \]
    The discriminant of this equation is $4m^2(n^2+1)$, which is never a perfect square for $n\ge 1$. This contradicts the integrality of $a$.
\end{proof}

%\begin{remark}
%A bidouble plane of $\pi\colon S\to\P^2$ branched along two lines and a cubic admits no Ulrich line bundles with respect to $H$. However, \cite[Theorem~1.1]{PonsTonini} implies that it admits Ulrich line bundles with respect to $-K_S$.
%\end{remark}

\begin{theorem}[Ulrich line bundles]\label{line bundles L}
Let $\pi\colon S\to \mathbb{P}^2$ be a bidouble plane branched along general curves $D_1,D_2,D_3$ of degrees $n_1,n_2,n_3$. If $S$ admits an Ulrich line bundle with respect to $H$,
then some permutation of $(n_1,n_2,n_3)$ belongs to
\[
\{(0,2,2)\}\;\cup\;\{(0,4,2n):n\geq 1\}\;\cup\;\{(2,2,2n):n\geq 1\}.
\]
\end{theorem}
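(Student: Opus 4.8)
The plan is to combine the necessary condition already extracted in Corollary~\ref{Cor Ulrich} with the sharper non-existence result of Proposition~\ref{(0,2,2n)}, the remaining work being a careful bookkeeping of permutations. First I would invoke Corollary~\ref{Cor Ulrich}: if $S$ carries an Ulrich line bundle with respect to $H$, then some permutation of $(n_1,n_2,n_3)$ must lie in
\[
\{(0,2,2n):n\ge 1\}\;\cup\;\{(0,4,2n):n\ge 2\}\;\cup\;\{(2,2,2n):n\ge 1\}.
\]
Comparing this with the target set, the only surplus is the tail $(0,2,2n)$ for $n\ge 3$ in the first family, together with a shift in the index range of the second family.

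Next I would remove the surplus. Proposition~\ref{(0,2,2n)} asserts precisely that a bidouble plane with branch degrees $(0,2,2n)$ for $n\ge 3$ admits no Ulrich line bundle. Hence no triple $(0,2,2n)$ with $n\ge 3$ is compatible with the existence of an Ulrich line bundle, and the first family is reduced to the two triples $(0,2,2)$ and $(0,2,4)$.

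Finally I would reconcile the residual triples with the claimed list. The triple $(0,2,2)$ already appears in the target set. The triple $(0,2,4)$ is a permutation of $(0,4,2)=(0,4,2\cdot 1)$, so it is the $n=1$ member of $\{(0,4,2n):n\ge 1\}$; absorbing it into $\{(0,4,2n):n\ge 2\}$ extends the latter to $\{(0,4,2n):n\ge 1\}$. The family $\{(2,2,2n):n\ge 1\}$ carries over unchanged. Collecting these yields exactly
\[
\{(0,2,2)\}\;\cup\;\{(0,4,2n):n\ge 1\}\;\cup\;\{(2,2,2n):n\ge 1\},
\]
as required.

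I do not expect a genuine analytic obstacle here: the cohomological and lattice-theoretic content is entirely absorbed into Corollary~\ref{Cor Ulrich} and Proposition~\ref{(0,2,2n)}. The only point requiring care is tracking the two competing index ranges through the symmetrization by permutations, in particular recognizing that the surviving triple $(0,2,4)$ must be re-expressed as $(0,4,2)$ in order to merge consistently with the $(0,4,2n)$ family rather than being double-counted or lost.
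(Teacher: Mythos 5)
Your proposal is correct and follows essentially the same route as the paper: the paper's proof likewise cites Corollary~\ref{Cor Ulrich} for the necessary condition, applies Proposition~\ref{(0,2,2n)} to eliminate the triples $(0,2,2n)$ with $n\ge 3$, and then records the surviving list, with the leftover $(0,2,4)$ absorbed as the $n=1$ member of the $(0,4,2n)$ family. No gaps.
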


\begin{proof}
Corollary~\ref{Cor Ulrich} shows that if a bidouble cover 
$S$ branched along curves of degrees $(n_1,n_2,n_3)$ admits an Ulrich line bundle, then some permutation of $(n_1,n_2,n_3)$ belongs to
\[
\{(0,2,2n)\colon n\ge 1\}\;\cup\;\{(0,4,2n)\colon n\ge 2\}\;\cup\;
\{(2,2,2n)\colon n\ge 1\}.
\] 
By Proposition~\ref{(0,2,2n)}, if a permutation of $(n_1,n_2,n_3)$ is equal to $(0,2,2n)$ with $n\ge 3$, then $S$ admits no Ulrich line bundles with respect to $H$. Therefore, the only possible cases occur when some permutation of $(n_1,n_2,n_3)$ belongs to
\[
\{(0,2,2)\}\;\cup\;\{(0,4,2n)\colon n\ge 1\}\;\cup\;\{(2,2,2n)\colon n\ge 1\},
\]
which is precisely the statement of the theorem. This completes the proof.
\end{proof}

While the converse of Theorem~\ref{line bundles L} remains open, the following proposition provides explicit examples of bidouble planes carrying Ulrich line bundles.

\begin{proposition}\label{(0,2,2) and (0,2,4)}
Let $\pi\colon S \to \mathbb{P}^2$ be a bidouble plane branched along general curves $D_1,D_2,D_3$ of degrees $n_1,n_2,n_3$ with $n_1\le n_2\le n_3$. If $(n_1,n_2,n_3)$ is 
$(0,2,2)$ or $(0,2,4)$, $S$ admits an Ulrich line bundle
with respect to $H$.
\end{proposition}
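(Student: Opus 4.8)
The plan is to construct the line bundle by hand and verify the Ulrich property through Proposition~\ref{iff Ulrich}(4), which for a line bundle $\mathcal L$ (so $c_2=0$, $\operatorname{rk}=1$) reduces to the Chern-class identities \eqref{ulrich-equalities1}--\eqref{ulrich-equalities2} together with $h^0(\mathcal L(-H))=h^0(\mathcal L^\vee(2H+K_S))=0$. Using Proposition~\ref{invariants}, I first record that for $(n_1,n_2,n_3)=(0,2,2)$ one has $K_S\equiv -H$, $H^2=4$, $\chi(\mathcal O_S)=1$, so $S$ is a del Pezzo surface of degree $4$; while for $(0,2,4)$ one has $K_S\equiv 0$, $H^2=4$, $\chi(\mathcal O_S)=2$, so $S$ is a $K3$ surface. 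In each case \eqref{ulrich-equalities1}--\eqref{ulrich-equalities2} collapse to prescribing the two numbers $\mathcal L\cdot H$ and $\mathcal L^2$: one finds $(\mathcal L\cdot H,\mathcal L^2)=(4,2)$ in the del Pezzo case and $(6,4)$ in the $K3$ case. It then remains to exhibit a class with these intersection numbers and to force the two $h^0$-vanishings.

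In the del Pezzo case the argument is immediate. Since $\rho(S)=b_2(S)=6$, the lattice $\NS(S)\cong\langle1\rangle\oplus\langle-1\rangle^{\oplus5}$ in a geometric basis $\ell,e_1,\dots,e_5$ with $H=-K_S=3\ell-\sum e_i$, and I would take $\mathcal L=2\ell-e_1-e_2$, so that $\mathcal L^2=2$ and $\mathcal L\cdot H=4$. Because $K_S\equiv -H$, the two test bundles are $\mathcal L(-H)\equiv \mathcal L-H$ and $\mathcal L^\vee(2H+K_S)\equiv H-\mathcal L$, and both classes meet $H$ in degree $0$; since $H$ is ample, an effective representative of either would have to be trivial, which is excluded by $\mathcal L^2=2\neq 4=H^2$. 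Hence both groups vanish and $\mathcal L$ is Ulrich.

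In the $K3$ case I would use the intermediate covers of diagram~\eqref{diagram}. As $n_1=0$, the quotient $Y_3$ is the double plane branched along the conic $D_2$, so $Y_3\cong\mathbb P^1\times\mathbb P^1$ and $p_3\colon S\to Y_3$ is branched along $\pi_3^*D_3\in|\mathcal O(4,4)|$; setting $F_1=p_3^*\mathcal O(1,0)$ and $F_2=p_3^*\mathcal O(0,1)$ gives two genus-one pencils with $F_i^2=0$, $F_1\cdot F_2=2$ and $H=F_1+F_2$. The quotient $Y_2$ is the double plane branched along the quartic $D_3$, a degree-two del Pezzo surface with $-K_{Y_2}=\pi_2^*\mathcal O(1)$ and $\rho(Y_2)=8$ (Proposition~\ref{Picard intermediate}). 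Choosing $u_0\in\NS(Y_2)$ with $u_0^2=-2$ and $u_0\cdot K_{Y_2}=0$ (for instance a difference $E_i-E_j$ of exceptional classes) and setting $u=p_2^*u_0$, the projection formula gives $u^2=-4$ and $u\cdot H=0$, while the orthogonality of distinct $G$-eigenspaces of $H^2(S,\mathbb Q)$ (Lemma~\ref{NS}) places $u$ in the $(-+)$-summand, so that $u\cdot F_1=u\cdot F_2=0$. I then take $\mathcal L=F_1+2F_2+u$; since $q(S)=0$ this class is represented by a line bundle, and $\mathcal L\cdot H=6$, $\mathcal L^2=4\,F_1\cdot F_2+u^2=4$, as required.

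The hard part will be the vanishing $h^0(\mathcal L(-H))=h^0(\mathcal L^\vee(2H+K_S))=0$, i.e.\ the non-effectivity of $\mathcal L-H\equiv F_2+u$ and of $2H-\mathcal L\equiv F_1-u$. Each has self-intersection $-4$ and meets $H$ in degree $2>0$, so Riemann--Roch only gives $\chi=0$ and does not by itself preclude effectivity. To rule it out I would use that $(F_2+u)\cdot F_2=0$ and $(F_1-u)\cdot F_1=0$: an effective representative would be vertical for the corresponding pencil, hence a sum of fibre components. For general branch curves the two genus-one pencils have only irreducible fibres, so every vertical effective class is a multiple of $F_2$ (resp.\ $F_1$); since $u\neq 0$ lies in the $(-+)$-summand, neither $F_2+u$ nor $F_1-u$ can be such a multiple, a contradiction. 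Establishing that generic branch data forces irreducible fibres -- equivalently, the absence of vertical $(-2)$-curves despite $\rho(S)=9$ -- is the delicate point, which I would reduce to the statement that a reducible fibre requires the four branch points cut out on a ruling line to degenerate beyond a simple collision, a non-generic condition. Granting this, Proposition~\ref{iff Ulrich}(4) shows that $\mathcal L$ is Ulrich in both cases.
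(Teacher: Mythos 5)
Your construction succeeds, and for the case $(0,2,4)$ it is essentially the paper's own: writing $\Gamma_1,\Gamma_2$ for the two genus-one curves lying over a line tangent to the conic (so $\Gamma_i\equiv F_i$ and $H=\Gamma_1+\Gamma_2$), the paper takes $D=H+\Gamma_1+E_1'-E_2'\equiv 2F_1+F_2+(E_1'-E_2')$, which is your $\mathcal L=F_1+2F_2+u$ up to swapping the two rulings and relabeling the exceptional classes. For $(0,2,2)$ you exhibit the class $2\ell-e_1-e_2$ and check the vanishings by hand, whereas the paper simply cites Pons-Llopis--Tonini; your version is more self-contained, and the ampleness argument there is complete and correct.

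The one genuine gap is the point you yourself flag: in the $K3$ case your vanishing argument needs every fibre of the two genus-one pencils $|F_1|,|F_2|$ to be irreducible, and you do not prove this. The statement is true: a reducible fibre over a ruling line $R\subset\P^1\times\P^1$ forces the degree-four divisor cut out on $R$ by the branch curve to be of the form $2p+2q$ or $4p$, i.e.\ the image line (a tangent line of $D_2$) must be a bitangent or hyperflex tangent of the quartic $D_3$; these form a finite set of points in the dual plane, and the tangent lines of a general conic $D_2$ trace a conic there avoiding any finite set, so a general pair $(D_2,D_3)$ works. But this lemma has to be supplied, and ``degenerate beyond a simple collision is non-generic'' is not yet a proof since the tangent lines of $D_2$ already form only a one-parameter family. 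The paper's proof sidesteps the fibration entirely: since $E_1'$ is an effective curve with $(D-H)\cdot E_1'=-1<0$, any effective representative of $D-H$ must contain $E_1'$, and the residual class $\Gamma_1-E_2'$ has $H$-degree $0$ with $H$ ample, hence cannot be effective (and is nonzero); the class $2H-D$ is handled symmetrically. That argument uses only the effectivity of the pulled-back exceptional curves and closes the vanishing in two lines, so you may prefer to substitute it for the irreducible-fibre analysis; otherwise, add the genericity lemma above.
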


\begin{proof}
We treat the two cases separately.

\smallskip
\noindent\textbf{Case (0,2,2).} In this case, $S$ is a del Pezzo surface of degree $4$, and $-K_S = H$. By \cite[Theorem~1.1]{PonsTonini}, every del Pezzo surface of degree $d$ admits an Ulrich line bundle with respect to $-K_X$. More precisely, a line bundle $\mathcal{L}$ is Ulrich if and only if $\mathcal{L}\cong \mathcal{O}_S(D)$ for some rational normal curve $D \subset S$ of degree at most $d$. Therefore, $S$ admits an Ulrich line bundle with respect to $H$.

\smallskip
\noindent\textbf{Case (0,2,4).} Let $\pi\colon S\to\P^2$ be the bidouble plane ramified along the zero divisor, a smooth conic $C$, and a smooth quartic $Q$. Then $S$ is a K3 surface, and following the notation in the diagram \eqref{diagram}, it admits two intermediate double planes: $Y_3$, the double plane branched along $C$, and $Y_2$, the double plane branched along $Q$. The surface $Y_3$ is isomorphic to $\P^1\times\P^1$, hence $\rho(Y_3)=2$. The surface $Y_2$ is a del Pezzo surface of degree $2$, so $\rho(Y_2)=8$, and we may view $Y_2$ as the blow–up of $\P^2$ at $7$ general points. In particular,
\[
\NS(Y_2) \supset \langle H_{Y_2}, E_1,\dots,E_7\rangle,
\]
where $H_{Y_2}$ is the pullback of a line in $\P^2$ via $\pi_2$, and $E_i$ are the exceptional divisors.

By Proposition~\ref{iff Ulrich}, a line bundle $\L$ on $S$ is Ulrich with respect to $H$ if and only if
\begin{equation}\label{num conditions 024}
c_1^2(\L) = 4,\qquad c_1(\L)\cdot H= 6,\qquad
h^0(S,\L(-H)) = h^0(S,\L^\vee(2H)) = 0.
\end{equation}
We now construct such a line bundle. Let $\ell\subset \P^2$ be a line tangent to $C$ which intersects $Q$ in $4$ distinct points. By \cite[Theorem~4.4]{PN21}, the preimage $\pi^{-1}(\ell) = \Gamma_1 + \Gamma_2$
is the union of two smooth curves of genus $1$. In particular, $\Gamma_i^2 = 2g(\Gamma_i) - 2 = 0$.
Moreover, $\pi^*\ell \sim H$, hence
$H = \Gamma_1 + \Gamma_2$,\
and by symmetry, we obtain
$H\cdot \Gamma_j = 2$ for each $j$.

Let $E_i' := p_2^*E_i$ denote
the pullback of the exceptional divisors from $Y_2$ to $S$. Since $H_{Y_2}= -K_{Y_2}$, we have
$H = p_2^*(H_{Y_2}) = p_2^*(-K_{Y_2})$.
In particular,
\[
H\cdot E_i' = p_2^*(-K_{Y_2})\cdot p_2^*E_i
= 2(-K_{Y_2}\cdot E_i) = 2
\]
for all $i$. Since $H=\Gamma_1+\Gamma_2$ we have
\[
\Gamma_1\cdot E_i' = \Gamma_2\cdot E_i' = 1
\]
for all $i$.

Choose two different exceptional divisors $E_1,E_2$ on $Y_2$ and consider the divisor
\[
D := H + \Gamma_1 + E_1' - E_2' \in \Div(S).
\]

First, we verify that the numerical conditions in \eqref{num conditions 024} are satisfied. Using the intersection numbers above and $(E_i')^2 = 2E_i^2 = -2$, we compute
\[
\begin{aligned}
D\cdot H= (H+\Gamma_1+E_1'-E_2')\cdot H = H^2 + \Gamma_1\cdot H + E_1'\cdot H - E_2'\cdot H = 4 + 2 + 2 - 2 = 6,
\end{aligned}
\]
and
\[
\begin{aligned}
D^{2}
&= H^2 + \Gamma_1^2 + (E_1')^2 + (E_2')^2 + 2\big(H\cdot\Gamma_1 + H\cdot E_1' - H\cdot E_2'
       + \Gamma_1\cdot E_1' - \Gamma_1\cdot E_2' - E_1'\cdot E_2'\big) \\
&= 4 + 0 - 2 - 2 + 2\big(2 + 2 - 2 + 1 - 1 - 0\big)= 4.
\end{aligned}
\]
Thus $D^{2} = 4$ and $D\cdot H = 6$. 

We now show that the cohomological vanishings in \eqref{num conditions 024} hold. Due to $H = \Gamma_1+\Gamma_2$, we have
\[
D - H = \Gamma_1 + E_1' - E_2', \qquad
2H - D = \Gamma_2 - E_1' + E_2'.
\]

For the sake of clarity, we show that $h^0(S,D-H)=0$; the vanishing $h^0(S,2H-D)=0$ is proved similarly. For a contradiction, assume that $h^0(S,D-H)>0$, therefore, there exists an effective divisor $F\sim D-H$.
A direct computation gives $F^2 = -4$ and $H\cdot F = 2$.
Moreover,
\[
F\cdot E_1' = \Gamma_1\cdot E_1' + (E_1')^2 - E_2'\cdot E_1' = 1 - 2 - 0 = -1.
\]
Since $F$ is effective, the inequality $F\cdot E_1'<0$ forces $E_1'$ to be a component of $F$, so we could write $F = E_1' + F'$, with $F'$ effective. However, notice that
$(F')^2 = (\Gamma_1 - E_2')^2 = -4$ and
$H\cdot F' = H\cdot\Gamma_1 - H\cdot E_2' = 0$.
Since $H$ is ample, any nonzero effective divisor $Z$ must satisfy $H\cdot Z > 0$, hence $F'$ cannot be effective. This contradiction shows $h^0(S,D-H) =  0$.

Putting everything together, the line bundle $\L := \O_S(D)$ satisfies
\[
c_1^2(\L)=4,\qquad H\cdot c_1(\L)=6,\qquad h^0(S,\L(-H))=h^0(S,\L^\vee(2H))=0,
\]
so, by Proposition~\ref{iff Ulrich}, $\L$ is an Ulrich line bundle with respect to $H$.
\end{proof}

\begin{remark}
In case $(2,2,2)$, \cite[Theorem~4.8]{Gar22} implies that $S$ is an Enriques surface of degree $4$. In this case, $S$ is nodal, and therefore the methods of \cite{BorisoNuer} do not apply.

In case $(0,4,4)$, one can construct a divisor analogous to the one used in case $(0,2,4)$, which satisfies the numerical conditions of
Proposition~\ref{iff Ulrich}. However, verifying the required cohomological
vanishings appears to be substantially more delicate, and we were unable to
establish them. More precisely, instead of choosing a line tangent to the conic as in the case
$(0,2,4)$, one may consider a smooth conic whose intersection with one of the
quartic branch curves is an even divisor. The existence of such a conic follows
from \cite[Corollary~1.3]{PN21}. Nevertheless, this approach does not seem to lead
to a proof of the necessary vanishing conditions.

For the remaining cases of Theorem~\ref{line bundles L} that could
a priori admit an Ulrich line bundle, Lemma~\ref{NS} shows that
$\rho(S)=\rho(Y_3)$. This strongly restricts the geometry of $\Pic(S)$: in
particular, for any line bundle $\mathcal L\in\Pic(S)$, a suitable multiple of
$\mathcal L$ is the pullback of a line bundle on $Y_3$. This observation suggests
that the existence of Ulrich line bundles on $S$ in these cases is unlikely, as illustrated by Proposition~\ref{(0,2,2n)}.
\end{remark}

%%%%%%%%%%%%%%%%%%%%%%%%%%%
%%%%%%%%%%%%%%%%%%%%%%%%%%%

\end{document}